\newcommand{\qed}{$\;\;\;\Box$}
\newenvironment{proof}{\par\smallbreak{\sl Proof.~}}
{\unskip\nobreak\hfill \qed \par\medbreak}
\newcounter{claim}
\renewcommand{\theclaim}{\arabic{claim}}
\newenvironment{claim}{\refstepcounter{claim}%
	\par\medskip\par\noindent{\bf Claim~\theclaim.}\rm}%
{\par\medskip\par}
\newenvironment{subproof}{\par\noindent{\bf Proof of Claim.}}%
{\qed\par\smallbreak}
\newcommand{\hide}[1]{}
\newcommand{\beq}{\begin{equation}}
	\newcommand{\ee}{\end{equation}}
\newtheorem{thm}{Theorem}[section]
\newtheorem{lemma}[thm]{Lemma}
\newtheorem{defn}[thm]{Definition}
\renewcommand{\Re}{\mathop{\mathrm{Re}}\nolimits}
\title{On the bounded smooth solutions to exponentially stable linear nonautonomous  hyperbolic systems
}
\newcounter{thesame}
\author{
	Irina Kmit
	\thanks{Institute of Mathematics, Humboldt University of Berlin. On leave from the
		Institute for Applied Problems of Mechanics and Mathematics,
		Ukrainian National Academy of Sciences, Lviv. {\small   E-mail:
			{\tt irina.kmit@hu-berlin.de}}}
	\ \ \ Viktor Tkachenko \thanks{Institute of Mathematics,
		National Academy of Sciences of	Ukraine, Kyiv.
		{\small   E-mail:
			{\tt vitk@imath.kiev.ua}}
}}
\date{}
\begin{document}
	
	\maketitle






\begin{abstract}
We investigate global bounded solutions of higher regularity  to  boundary value problems
for a general linear nonautonomous first order 1D hyperbolic system in a strip.
We establish the existence of such solutions under the assumption of exponential stability and certain dissipativity (or non-resonant) conditions. Our results demonstrate the existence and uniqueness of $C^1$- and $C^2$-bounded solutions, in particular,   periodic and almost periodic solutions. The number of imposed dissipativity conditions is related to the regularity of solutions. This connection arises due to the nonautonomous nature of the hyperbolic system under consideration, 
in contrast to  autonomous settings. 

In the general case of global bounded smooth solutions we assume  exponential stability in $H^1$. In the  case of periodic solutions, the weaker assumption of $L^2$-exponential stability proves to be sufficient. 
\end{abstract}


\section{Introduction}
\renewcommand{\theequation}{{\thesection}.\arabic{equation}}
\setcounter{equation}{0}

\subsection{Problem setting and our results} 
\renewcommand{\theequation}{{\thesection}.\arabic{equation}}
\setcounter{equation}{0}

We consider general
linear  first order hyperbolic systems   of the  type
\begin{equation}\label{eq:1}
\partial_t u + a(x,t)\partial_x u + b(x,t)u  = f(x,t),\quad x\in (0,1),
\end{equation}
where  $n\ge 2$ is a fixed integer, $u=(u_1,\ldots,u_n)$ and $f = (f_1,\dots,f_n)$ are  vectors of real-valued functions,
$a=diag(a_1,\dots,a_n)$ and $b = (b_{jk})$ are $n\times n$-matrices with real-valued entries.

Set
$$
\Pi = \{(x,t)\in\mathbb R^2\,:\,0\le x\le1\}.
$$
Suppose that there exist an integer $m$ in the range  $0\le m\le n$ and    a positive real $a_0$ such that
\begin{equation}\label{eq:h1}
	\begin{array}{ll}
		\inf\left\{a_j(x,t)\,:\,(x,t)\in\Pi,\ 1\le j\le m\right\}\ge a_0,\\ [1mm]
		\sup\left\{a_j(x,t)\,:\,(x,t)\in\Pi,\ m+1\le j\le n\right\}\le -a_0,\\ [1mm]
		\inf\left\{|a_j(x,t)-a_k(x,t)|\,:\,\,(x,t)\in\Pi, \ 1\le j\ne k\le n\right\}\ge a_0.
	\end{array}
\end{equation}                                                                                                                                                                                            
We  supplement the system (\ref{eq:1})  with the reflection boundary conditions accordingly to 
the sign of $a_j$, namely
\begin{equation}\label{eq:2}
\begin{array}{ll}
u_{j}(0,t)= R_ju(\cdot,t), \quad 1\le j\le m,\\ [1mm]
 u_{j}(1,t)= R_ju(\cdot,t), \quad m< j\le n,
\end{array}
\end{equation}
where the operators $R_j$ are defined either on the space $H^1((0,1); \mathbb R^n)$ or  on the space $C([0,1];\mathbb R^n)$ and  are given by
\begin{eqnarray}
 \displaystyle R_jv=\sum\limits_{k=m+1}^nr_{jk} v_k(0)+\sum\limits_{k=1}^mr_{jk} v_k(1),\quad  j\le n,
\label{eq:R} 
\end{eqnarray}
and $r_{jk}$ are real constants.  
Here and in what follows,  we use a standard notation $H^1\left((0,1);\mathbb R^n\right)$ for the Sobolev space of 
functions $v\in L^2\left((0,1);\mathbb R^n\right)$ whose first order distributional derivatives
 belong to~$L^2\left((0,1);\mathbb R^n\right)$.
Write $R=(R_1,\dots,R_n)$.

Note that the problem  (\ref{eq:1}), (\ref{eq:2})
 is especially interesting as it covers various physical phenomena
\cite{BCN,DDTK1,DDTK2,PW}. 

The objective of this paper  is to establish sufficient conditions 
for the existence and
 uniqueness of global bounded classical ($C^1$- and $C^2$-continuously differentiable) solutions to the problem  (\ref{eq:1}), (\ref{eq:2}) in the strip $\Pi$.
 In the case that the coefficients $a, b,$ and $f$ exhibit almost periodic (or $T$-periodic) behavior, our analysis will reveal that the resulting bounded solution inherits almost periodicity (or $T$-periodicity).

 We use the same notation $\|\cdot\|$ for norms of vectors and matrices of different sizes. Thus, if $A=(A_1,\dots,A_k)$ is
 a $k$-vector, then $\|A\|=\max_{j\le k}|A_j|$. If $A=(A_{ij})$ is
 a $k\times k$-matrix, then $\|A\|=\max_{i,j\le k}|A_{ij}|$.

Let $\Omega$ be a subdomain  of $\Pi$ or  $\mathbb R$ and let $X$ be a Banach space.
Then write $BC(\Omega;X)$ to denote  the Banach space of
all bounded and continuous maps
$u:\Omega \to X$,
with norm
$$
\|u\|_{BC(\Omega;X)}=\sup\left\{\|u(z)\|_X\,:\,z\in\Omega\right\}.
$$
 We also use the space $BC^k_t(\Omega; X)$ of functions 
$u \!\in\! BC(\Omega;X)$ such that $\partial_t u, \dots, \partial_t^k u \!\in\! BC(\Omega;X),$ with norm
$$\|u\|_{BC_t^k(\Omega;X)}= \sum_{j=0}^k  \|\partial_t^j u\|_{BC(\Omega;X)}.$$
Similarly, for given $k\ge 1$, one can  introduce the space $BC^k(\Omega; X)$ of bounded and $k$-times 
continuously differentiable in $\Omega$ functions.

As usual, by ${\mathcal L}(X,Y)$ we will denote the space of linear
bounded operators from $X$ into~$Y$, and write ${\mathcal L}(X)$ for ${\mathcal L}(X,X)$.

 Let $s\in\mathbb R$ be arbitrary fixed. Set $\Pi_s=\{(x,t)\in\Pi: t\ge s\}$.
In the domain $\Pi_s$ consider the linear
 homogeneous system
\begin{equation}\label{eq:1u}
	\partial_tu  + a(x,t)\partial_x u + b(x,t) u = 0, \quad x\in(0,1),
\end{equation}
and supplement it with the  boundary conditions (\ref{eq:2})
and the initial conditions
\begin{eqnarray}
u(x,s) = \varphi(x), \quad x\in[0,1].
\label{eq:in}
\end{eqnarray}

 We say that a function $\varphi\in C([0,1];\mathbb R^n)$ satisfies the zero order compatibility conditions if 
 it fulfills the equations
\begin{equation}\label{zero}
 \begin{array}{ll}
 \displaystyle\varphi_j(0)	= \sum\limits_{k=m+1}^nr_{jk} \varphi_k(0)+\sum\limits_{k=1}^mr_{jk} \varphi_k(1), \quad 1\le j\le m,\\ [1mm]
 \displaystyle \varphi_j(1)	= \sum\limits_{k=m+1}^nr_{jk} \varphi_k(0)+\sum\limits_{k=1}^mr_{jk} \varphi_k(1), \quad m< j\le n.
 \end{array}
\end{equation}
 
Our starting point is 
  that, if the coefficients of (\ref{eq:1u}) are sufficiently smooth, then the problem 
  (\ref{eq:1u}), (\ref{eq:2}), (\ref{eq:in}) generates the evolution family 
on $L^2\left((0,1);\mathbb R^n\right)$ (see Theorem~\ref{evol0} below).  To formulate this result, note that, if $\varphi\in C_0^1([0,1];\mathbb R^n)$, then it satisfies the zero order compatibility conditions  and
 the problem (\ref{eq:1u}), (\ref{eq:2}), (\ref{eq:in})  has a unique classical solution accordingly to
\cite[Theorem 2.1]{ijdsde}. 

 \begin{defn}\label{L2}\rm
Let $\varphi\in L^2((0,1);\mathbb R^n)$. A function $u\in C\left([s,\infty); L^2\left((0,1);\mathbb R^n\right)\right)$ is called
an {\it $L^2$-generalized  solution} to the problem (\ref{eq:1}), (\ref{eq:2}), (\ref{eq:in})
 if, for any sequence $(\varphi^l)$ with
$\varphi^l\in C_0^1([0,1];\mathbb R^n)$ converging to the function
 $\varphi$ in $L^2\left((0,1);\mathbb R^n\right)$,
 the sequence $(u^l)$ with $u^l\in C^1\left([0,1]\times[s,\infty);\mathbb R^n\right)$
of  classical solutions to
(\ref{eq:1}), (\ref{eq:2}), (\ref{eq:in}) with
 $\varphi$ replaced by $\varphi^l$   fulfills the convergence
$$
\|u(\cdot,\theta)-u^l(\cdot,\theta)\|_{L^2\left((0,1);\mathbb R^n\right)} \to 0 \mbox{ as } l\to\infty,
$$
uniformly in $\theta$ varying in the range $s\le \theta\le t$, for every $t>s$.
\end{defn}

\begin{thm}\cite[Theorem 2.3]{KL1}\label{evol0}
Suppose that  the coefficients $a_j$ and $ b_{jk}$ of the system~(\ref{eq:1u}) belong to
$BC^1(\Pi)$ and
the conditions (\ref{eq:h1}) are fulfilled. Then, for given  $\varphi \in L^2\left((0,1);\mathbb R^n\right)$, there exists a unique
$L^2$-generalized  solution $u:(0,1)\times\mathbb R\to \mathbb R^n$ to the problem
(\ref{eq:1u}), (\ref{eq:2}), (\ref{eq:in}). 
Moreover, the map
$$
\varphi\mapsto U(t,s)\mathbb\varphi:=u(\cdot,t)
$$
from $L^2\left((0,1);\mathbb R^n\right)$ to itself
defines  a strongly continuous, exponentially bounded evolution family  $U(t,s)\in {\mathcal L}\left(L^2\left((0,1);\mathbb R^n\right)\right)$, which means that
\vspace{1mm}

(a)  $U(t,t)=I$ and $U(t,s)=U(t,r)U(r,s)$  for all $t\ge r\ge s,$
\vspace{1mm}

 (b) the map $(t,s)\in \mathbb R^2\mapsto U(t,s)\varphi \in L^2\left((0,1);\mathbb R^n\right)$ is continuous for all $t\ge s$ and every
$\varphi\in L^2\left((0,1);\mathbb R^n\right)$,
\vspace{1mm}

(c) there exist $K \ge 1$ and $\nu \in \mathbb R$ such that
$$
\|U(t,s)\|_{{\mathcal L}\left(L^2\left((0,1);\mathbb R^n\right)\right)}\le Ke^{\nu(t-s)} \mbox{ for all } t\ge s.
$$
\end{thm}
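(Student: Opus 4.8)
\medskip
\noindent\emph{Proof plan.}
The plan is to solve \reff{eq:1u}, \reff{eq:2}, \reff{eq:in} classically for smooth compatible data, to establish an $L^2$ a priori estimate for those classical solutions with constants depending only on the structural data, and then to pass to the limit. By \cite[Theorem~2.1]{ijdsde}, for every $\varphi\in C_0^1([0,1];\R^n)$ --- which trivially satisfies the compatibility conditions \reff{zero} --- there is a unique classical solution $u^\varphi\in C^1([0,1]\times[s,\infty);\R^n)$, and $\varphi\mapsto u^\varphi$ is linear. Since $C_0^1([0,1];\R^n)$ is dense in $L^2((0,1);\R^n)$, everything reduces to the bound
$$
\|u^\varphi(\cdot,t)\|_{L^2((0,1);\R^n)}\le Ke^{\nu(t-s)}\,\|\varphi\|_{L^2((0,1);\R^n)},\qquad t\ge s,
$$
with $K\ge 1$ and $\nu\in\R$ independent of $\varphi$ and $s$.

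To prove this estimate I would combine the $L^2$ energy identity with the method of characteristics. Multiplying the $j$-th equation of \reff{eq:1u} by $u_j$, integrating over $(0,1)$ and integrating by parts in $x$ gives
$$
\frac{d}{dt}\,\frac12\sum_{j=1}^n\int_0^1 u_j^2\,\dd x
=\frac12\sum_{j=1}^n\Bigl(a_j(0,t)u_j(0,t)^2-a_j(1,t)u_j(1,t)^2+\int_0^1(\partial_x a_j)\,u_j^2\,\dd x\Bigr)-\sum_{j,k=1}^n\int_0^1 b_{jk}u_k u_j\,\dd x.
$$
The volume integrals are bounded by $C\sum_j\int_0^1 u_j^2\,\dd x$ because $a_j,b_{jk}\in BC^1(\Pi)$. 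By \reff{eq:h1} the \emph{outgoing} boundary traces ($u_j(1,\cdot)$ for $j\le m$, $u_j(0,\cdot)$ for $j>m$) enter with a favourable sign, while the \emph{incoming} traces ($u_j(0,\cdot)$ for $j\le m$, $u_j(1,\cdot)$ for $j>m$) enter with an unfavourable sign but, by \reff{eq:2} and \reff{eq:R}, are bounded linear combinations of the outgoing traces. Hence, after integrating in $t$, the energy estimate closes by Gronwall once one controls $\int_s^t$ of the sum of squares of the outgoing traces by $\|\varphi\|_{L^2}^2+\int_s^t\|u(\cdot,\tau)\|_{L^2}^2\,\dd\tau$. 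This trace (hidden-regularity) bound is where the one-dimensional structure enters: along the $j$-th characteristic $\sigma\mapsto\omega_j(\sigma;x,\tau)$, which by $|a_j|\ge a_0$ and $a_j\in BC^1(\Pi)$ induces a change of variables with Jacobian bounded above and below, the restriction of $u_j$ satisfies a linear Volterra equation whose inhomogeneity is either a value of $\varphi$ (when the characteristic reaches $t=s$) or, via \reff{eq:R}, an incoming --- hence outgoing --- trace at the opposite endpoint at an earlier time. On a time window $[s,s+\delta]$ with $\delta$ smaller than the minimal time a characteristic needs to cross the strip, the characteristics carrying the outgoing traces reach $t=s$ directly, so the second alternative does not occur and the Volterra system decouples; using the bi-Lipschitz change of variables to compare $L^2$-norms over $(0,1)$ with $L^2$-norms over time slices then yields the trace estimate, and hence the a priori bound on $[s,s+\delta]$ with $s$-independent constants.

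Iterating this short-time bound over consecutive intervals of length $\delta$ yields the global estimate with an exponential constant $Ke^{\nu(t-s)}$. With the estimate available, for arbitrary $\varphi\in L^2$ and any approximating sequence $\varphi^l\in C_0^1$ the linearity of $\varphi\mapsto u^\varphi$ together with the estimate makes $(u^l)$ Cauchy in $C([s,T];L^2)$ for every $T>s$; its limit $u$ is independent of the sequence and is the $L^2$-generalized solution of Definition~\ref{L2}, and uniqueness is immediate from the same estimate. Setting $U(t,s)\varphi=u(\cdot,t)$, property (c) is exactly the a priori bound, property (a) follows from uniqueness of the $L^2$-generalized solution together with the analogous restriction property of classical solutions passed to the $L^2$ limit, and property (b) follows from the continuity in $t$ built into Definition~\ref{L2}, upgraded to joint continuity in $(t,s)$ by applying the a priori estimate to the differences $U(t,s)\varphi-U(t',s')\varphi$ and then approximating $\varphi$ in $L^2$.

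The main obstacle is the trace estimate, and within it the coupling of the two boundary traces through $R$: the trace at $x=0$ cannot be estimated without the trace at $x=1$, and conversely, so the Volterra system for the outgoing traces must be treated simultaneously (which is why one restricts to a short window on which no characteristic crosses the strip). The second, more technical point is that the characteristics are curved, since $a_j$ depends on $x$, so the characteristic flow and its Jacobian must be controlled quantitatively; the hypotheses $a_j\in BC^1(\Pi)$ and \reff{eq:h1} are precisely what make this control --- and hence the short-time argument --- uniform in $s$.
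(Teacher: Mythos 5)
The paper itself contains no proof of this statement: Theorem \ref{evol0} is imported verbatim from \cite[Theorem 2.3]{KL1}, so there is no in-paper argument to compare yours against. Judged on its own merits, your plan is correct and follows the standard route by which results of this kind are established (and, in spirit, the route of the cited source): classical solvability for the dense class of compatible $C_0^1$ data via \cite[Theorem 2.1]{ijdsde}, a uniform exponential $L^2$ a priori bound for classical solutions, and a density passage that simultaneously yields existence, uniqueness of the $L^2$-generalized solution and properties (a)--(c). Two points are worth stressing. First, you rightly recognize that the naive energy identity does not close, since no smallness of the reflection coefficients $r_{jk}$ is assumed; the short-window hidden-regularity bound for the outgoing traces, obtained by tracing the outgoing characteristics back to $t=s$ (possible once $\delta<1/\sup_{\Pi,j}|a_j|$, with the bi-Lipschitz control of the characteristic flow coming from $a_j\in BC^1(\Pi)$ and (\ref{eq:h1})), is exactly the missing ingredient, and iterating over windows of fixed length $\delta$ gives constants uniform in $s$, hence (c). Equivalently, one can dispense with the energy identity altogether and estimate $\|u(\cdot,t)\|_{L^2}$ directly from the characteristic representation of the type (\ref{rep}); either variant is fine. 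Second, for the evolution property (a) you need the a priori estimate, and the identification of a classical solution with the $L^2$-generalized solution of its own initial data, for classical solutions whose data are merely $C^1$ and compatible (the restrictions $u^l(\cdot,r)$ are not compactly supported); your energy/trace argument never uses compact support, so this is available, but it should be stated explicitly rather than left implicit in the phrase about the ``restriction property.'' With that caveat, the proposal is a sound proof plan.
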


Before stating our main results, we describe regularity properties of the evolution family.
 
\begin{thm}\label{evol}
	Suppose that  the coefficients $a_j$ and $ b_{jk}$ of the system (\ref{eq:1u}) belong to
	$BC^1(\Pi)$ and
	the conditions (\ref{eq:h1}) are fulfilled. Then, for given   
	$\varphi\in H^1\left((0,1);\mathbb R^n\right)$ satisfying the zero order compatibility 
	conditions (\ref{zero}), 
		 the map $(t,s)\in \mathbb R^2\mapsto U(t,s)\varphi \in 
		 L^2\left((0,1);\mathbb R^n\right)$ is continuously differentiable  for all $t\ge s$.
		Furthermore, 
		\begin{equation}\label{cap}
		U(\cdot,s)\varphi \in C([s_1, s_2]; H^1((0,1); \mathbb R^n))\cap
		C^1([s_1, s_2]; L^2((0,1); \mathbb R^n))
		\end{equation}
		for every $s_2 > s_1.$ 
\end{thm}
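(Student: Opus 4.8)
The plan is to establish the $H^1$-regularity and the $C^1$-in-time regularity by approximation, using the existing $L^2$-theory of Theorem~\ref{evol0} together with \emph{a priori} $H^1$-estimates for classical solutions. First I would take $\varphi \in H^1((0,1);\mathbb R^n)$ satisfying the zero order compatibility conditions~(\ref{zero}) and choose a sequence $\varphi^l \in C_0^1([0,1];\mathbb R^n)$ with $\varphi^l \to \varphi$ in $H^1$; this is possible because smooth functions with compact support are dense in $H^1$, and such functions automatically satisfy (\ref{zero}). For each $l$, the classical solution $u^l(\cdot,t) = U(t,s)\varphi^l$ exists by \cite[Theorem 2.1]{ijdsde}, and by Theorem~\ref{evol0} we already know $u^l(\cdot,t) \to U(t,s)\varphi$ in $L^2$, uniformly on compact $t$-intervals. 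The core of the proof is to upgrade this $L^2$-convergence to $H^1$-convergence of $u^l(\cdot,t)$ and to $L^2$-convergence of $\partial_t u^l(\cdot,t)$, both uniform on $[s_1,s_2]$.

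The key mechanism is to differentiate the system in $x$ (or equivalently use the equation itself to trade an $x$-derivative for a $t$-derivative) and derive the natural energy estimate. Writing $v = \partial_x u^l$, one differentiates (\ref{eq:1u}) to get a hyperbolic system for $v$ of the same structural type, with the same characteristic speeds $a_j$ but modified (still bounded $C^0$, since $a_j,b_{jk}\in BC^1$) zero-order coefficients and with an inhomogeneity involving $u^l$ and its lower-order terms; the boundary conditions for $v$ at $x=0,1$ are inherited from (\ref{eq:2}) by differentiating the relations $R_j$, which are evaluated at the endpoints, so they again have the reflection form with the same constants $r_{jk}$ but now coupling $v$ and lower-order traces. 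Along the characteristic curves $\dot x = a_j(x,t)$ (well-defined and transversal to $\{x=0\},\{x=1\}$ thanks to (\ref{eq:h1})), one integrates and uses Gronwall to obtain
\begin{equation}\label{eq:apriori}
\|u^l(\cdot,t)\|_{H^1} \le C(s_1,s_2)\,\|\varphi^l\|_{H^1}, \qquad t\in[s_1,s_2],
\end{equation}
where $C(s_1,s_2)$ is independent of $l$. The boundary terms generated when a characteristic hits $x=0$ or $x=1$ are controlled precisely because the reflection operators $R_j$ are bounded on the relevant trace spaces and because distinct speeds are uniformly separated, so no characteristic is tangent to the boundary. Applying the same estimate to the difference $u^l - u^{l'}$ (which solves the same problem with initial data $\varphi^l - \varphi^{l'}$) shows $(u^l(\cdot,t))$ is Cauchy in $C([s_1,s_2];H^1)$, hence $U(\cdot,s)\varphi \in C([s_1,s_2];H^1((0,1);\mathbb R^n))$.

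For the $C^1$-in-time statement, I would note that once $u^l \to U(\cdot,s)\varphi$ in $C([s_1,s_2];H^1)$, the equation (\ref{eq:1u}) itself gives $\partial_t u^l = -a\,\partial_x u^l - b\,u^l$, and the right-hand side converges in $C([s_1,s_2];L^2)$ because $a,b \in BC^1 \subset BC^0$ act as bounded multiplication operators on $L^2$ and $\partial_x u^l \to \partial_x U(\cdot,s)\varphi$ in $C([s_1,s_2];L^2)$. Thus $\partial_t u^l$ converges uniformly in $C([s_1,s_2];L^2)$, and since $u^l \to U(\cdot,s)\varphi$ uniformly, the limit is differentiable in $t$ with $\partial_t U(\cdot,s)\varphi = -a\,\partial_x U(\cdot,s)\varphi - b\,U(\cdot,s)\varphi \in C([s_1,s_2];L^2)$. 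This yields both $U(\cdot,s)\varphi \in C^1([s_1,s_2];L^2)$ and, combined with joint continuity in $(t,s)$ from Theorem~\ref{evol0}(b), the continuous differentiability of $(t,s)\mapsto U(t,s)\varphi$ in the $L^2$-topology; continuity of the $s$-derivative follows symmetrically by viewing $U(t,s)\varphi$ as the solution run from the moving initial time $s$ and using the cocycle identity $U(t,s)=U(t,r)U(r,s)$.

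The main obstacle is the \emph{a priori} $H^1$-bound (\ref{eq:apriori}) with an $l$-independent constant, specifically the careful bookkeeping of boundary reflections: one must verify that differentiating the boundary conditions does not lose regularity, that the reflected traces of $\partial_x u^l$ at $x=0$ and $x=1$ are controlled by $\|u^l(\cdot,t)\|_{H^1}$ on slightly earlier times (a Volterra-type feedback closed by Gronwall on a finite time window), and that the zero order compatibility conditions (\ref{zero}) — which hold for $\varphi^l\in C_0^1$ — are exactly what make the first-order traces match up so that no boundary jump is created at $t=s$. All of this is standard for strictly hyperbolic $2\times 2$-type reductions with separated speeds, but the nonautonomous coefficients require tracking the characteristics as genuine curves rather than lines; the separation condition in (\ref{eq:h1}) is what keeps this manageable.
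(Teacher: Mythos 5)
Your overall strategy (approximate $\varphi$ by smooth data, run a uniform $H^1$ a priori estimate for the approximating classical solutions, pass to the limit in $C([s_1,s_2];H^1)$, then read $\partial_t u$ off the equation and handle the $s$-derivative via the cocycle identity) is exactly the route the paper takes, with the $H^1$ estimate supplied by the cited Lemma of \cite{KRT}. However, your very first step contains a genuine gap: you choose $\varphi^l\in C_0^1([0,1];\mathbb R^n)$ with $\varphi^l\to\varphi$ in $H^1$, justified by ``smooth functions with compact support are dense in $H^1$''. This is false on an interval: $H^1((0,1))$ embeds continuously into $C([0,1])$, so $H^1$-convergence forces uniform convergence, and any $H^1$-limit of compactly supported functions must vanish at $x=0$ and $x=1$. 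A general $\varphi\in H^1$ satisfying only the zero order compatibility conditions (\ref{zero}) has nonzero boundary values, so your approximating sequence simply does not exist, and the whole Cauchy-sequence argument has nothing to act on. Your remark that compactly supported functions ``automatically satisfy (\ref{zero})'' is true but beside the point: once you give up compact support to regain density, you must \emph{arrange} that the approximations satisfy (\ref{zero}), since both the classical (piecewise $C^1$) solvability and the $H^1$ a priori estimate require it.

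The paper closes exactly this hole with an explicit correction: set $\widetilde\varphi(x)=\varphi(0)+x(\varphi(1)-\varphi(0))$, note $\varphi-\widetilde\varphi\in H_0^1((0,1);\mathbb R^n)$, approximate it by $\varphi_0^l\in C_0^\infty$, and put $\varphi^l=\varphi_0^l+\widetilde\varphi$. Then $\varphi^l\to\varphi$ in $H^1$ \emph{and} $\varphi^l(0)=\varphi(0)$, $\varphi^l(1)=\varphi(1)$, so each $\varphi^l$ satisfies (\ref{zero}) because $\varphi$ does; the corresponding solutions are then only piecewise continuously differentiable (Lemma \ref{km}), not classical as you assert, but the estimate of Lemma \ref{H1} applies to them and to differences, giving the convergence in $C([s_1,s_2];H^1)\cap C^1([s_1,s_2];L^2)$. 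With this replacement of your approximation step (and the corresponding bookkeeping that the limit coincides with the $L^2$-generalized solution, by the $L^2$-continuity in Theorem \ref{evol0}), the remainder of your argument matches the paper's proof.
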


We now recall the concept of a  characteristic curve.
 For given $j\le n$, $x \in [0,1]$, and $t \in \mathbb R$, the $j$-th characteristic of (\ref{eq:1u})
passing through the point $(x,t)\in\Pi$ is defined
as the solution 
$$
\xi\in [0,1] \mapsto \omega_j(\xi)=\omega_j(\xi,x,t)\in \mathbb R
$$ 
of the initial value problem
$$\partial_\xi\omega_j(\xi, x,t)=\frac{1}{a_j(\xi,\omega_j(\xi,x,t))},\;\;
\omega_j(x,x,t)=t.$$
Due to the assumption (\ref{eq:h1}), the characteristic curve $\tau=\omega_j(\xi,x,t)$ reaches the
boundary of $\Pi$ in two points with distinct ordinates. Let $x_j$
denote the abscissa of that point whose ordinate is smaller.
Notice that  the value of  $x_j$
does not depend on $x,t$, and  it holds
$$x_j=\left\{
 \begin{array}{rl}
 0 &\mbox{if}\ 1\le j\le m\\
 1 &\mbox{if}\ m<j\le n.
\end{array}
\right.$$

For $i=0,1,2$ and $j\le n$, we will use the notation
\begin{eqnarray*}
c_j^i(\xi,x,t)=\exp \int_x^\xi
\left[\frac{b_{jj}}{a_{j}} - i\frac{\partial_t a_{j}}{a_{j}^2} \right](\eta,\omega_j(\eta))\,d\eta,\quad
d_j^i(\xi,x,t)=\frac{c_j^i(\xi,x,t)}{a_j(\xi,\omega_j(\xi))}.
\end{eqnarray*}
We write $c_j(\xi,x,t) $ and $d_j(\xi,x,t)$ for  $c_j^0(\xi,x,t)$ and  $d_j^0(\xi,x,t)$, respectively. Introduce operators $G_0, G_1, G_2 \in {\mathcal L}(BC(\mathbb R; \mathbb R^n))$ by
\begin{eqnarray} \label{eq:G}
 [G_i\psi]_j(t) = c_j^i(x_j, 1 - x_j, t)
 \sum\limits_{k=1}^nr_{jk}\psi_k(\omega_j(x_j, 1 - x_j, t)),\quad 
j \le n, \  \ i = 0,1,2.
\end{eqnarray}

Accordingly to \cite[p. 55]{Cord}, a continuous function $w(x,t)$ defined on $\Pi$
is   {\it Bohr almost periodic in $t$ uniformly in $x$ } 
if for
every $\mu > 0$ there exists a relatively dense set of $\mu$-almost
periods of~$w$, i.e., for every $\mu > 0$ there exists a positive number $l$
such that every interval of length $l$ on $\mathbb R$ contains a number $\tau$ such that
$$
| w(x,t + \tau) - w(x,t)| < \mu \ \ \mbox{ for  all }  (x,t) \in \Pi.
$$
Denote by $AP(\mathbb R; \mathbb R^n)$  the space of  Bohr  almost periodic vector-functions
$u:\mathbb R\to\mathbb R^n$. 
Analogously,  denote by $AP(\Pi; \mathbb R^n)$  the space of 
Bohr  almost periodic functions in $t$ uniformly in $x\in[0,1]$.
 If $n=1$, then we will simply write 
 $AP(\mathbb R)$ and  $AP(\Pi)$.

\begin{defn}\rm
		A function $u\in BC^1(\Pi;\mathbb R^n)$ satisfying (\ref{eq:1}), (\ref{eq:2}) pointwise is called
		a {\it bounded classical solution} to
		(\ref{eq:1}), (\ref{eq:2}) in $\Pi$. 
\end{defn}

We are now prepared to state our main results.
Our primary assumptions are the exponential stability of the system and certain dissipativity conditions. 

\begin{thm} \label{lin-smooth}
		${\bf (i)}$ Let
			$a_j, b_{jk}, f_j \in BC^1(\Pi)$ and $\partial^2_{xt}a_j, \partial^2_{xt}f_{j}\in BC(\Pi)$
		for all $j,k \le n$. Assume that  the evolution family $U(t,s)$ 
	generated by the problem (\ref{eq:1u}), (\ref{eq:2})  
	satisfy the exponential stability estimates 
		\begin{eqnarray}
		  \|U(t,s)\varphi\|_{L^2((0,1); \mathbb R^n)} \le M e^{-\alpha(t-s)}\|\varphi\|_{L^2((0,1); \mathbb R^n)},\quad t \ge s,\label{est1} \\	
	  \|U(t,s)\psi\|_{H^1((0,1); \mathbb R^n)} \le M e^{-\alpha(t-s)}\|\psi\|_{H^1((0,1); \mathbb R^n)}, \quad t \ge s, 
\label{est2}\end{eqnarray}
`
Moreover, assume that  the dissipativity conditions
	\begin{eqnarray} \label{G_i}
	\|G_i\|_{{\mathcal L}(BC(\mathbb R; \mathbb R^n))} < 1
	\end{eqnarray}
	are fulfilled  for $i=0$ and $i=1$. Then
	the system  (\ref{eq:1}), (\ref{eq:2}) has a unique bounded classical solution $u^*$. 
	Moreover, the a priori estimate 
	\begin{equation} \label{est21}
	\|u^*\|_{BC^1(\Pi;\mathbb R^n)} \le L_{1}\left(\| f\|_{BC^1(\Pi;\mathbb R^n)} + \| \partial^2_{xt}f\|_{BC(\Pi;\mathbb R^n)} \right)
	\end{equation}
	is fulfilled for a constant $L_1$ not depending on $f.$
	
	${\bf (ii)}$ Assume, additionally, that $a_j, b_{jk},f_j\in BC_t^2(\Pi)$ for all $j,k \le n$, and the dissipativity condition
	 (\ref{G_i}) is satisfied for $i=2$. Then
	$u^* \in BC^2(\Pi;\mathbb R^n)$.
	Moreover, the a priori estimate 
	\begin{equation} \label{est22}
	\|u^*\|_{BC^2(\Pi;\mathbb R^n)}   \le L_2
	\left(\| f\|_{BC^1(\Pi;\mathbb R^n)}+ \| \partial^2_{xt}f\|_{BC(\Pi;\mathbb R^n)} +\|\partial_t^2 f\|_{BC\left(\Pi;\mathbb R^n\right)}\right)
	\end{equation}
	is fulfilled for a constant $L_2$ not depending on $f.$
	
	${\bf (iii)}$ If, additionally to the conditions of Claim $(ii)$,    it holds that $a_j, b_{jk}, f_j \in AP(\Pi)$ for all $j,k \le n$, 
  	then the bounded classical solution $u^*$ belongs to~$AP(\Pi;\mathbb R^n)$.
\end{thm}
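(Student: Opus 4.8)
The plan is to reduce the boundary value problem \reff{eq:1}, \reff{eq:2} to a fixed point problem on $BC(\Pi;\R^n)$ by integrating along characteristics, and then to bootstrap regularity by differentiating this integral representation. First I would integrate \reff{eq:1} along the $j$-th characteristic curve $\tau=\om_j(\xi,x,t)$ from the boundary point $x_j$ to $x$. This yields, for each $j\le n$, a representation of the form
\[
u_j(x,t) = c_j(x_j,x,t)\,[Ru(\cdot,\om_j(x_j,x,t))]_j + \int_{x_j}^x d_j(\xi,x,t)\,f_j(\xi,\om_j(\xi,x,t))\dd\xi,
\]
where the boundary term is evaluated using \reff{eq:2}. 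Composing this with the boundary conditions gives a system of the schematic form $u = \F u + F$, where $\F$ acts on the traces $u_k(0,\cdot)$, $u_k(1,\cdot)$ through the operator whose ``$t$-slice'' is $G_0$. The dissipativity condition \reff{G_i} for $i=0$ gives $\|G_0\|<1$, so the trace problem has a unique solution in $BC(\R;\R^n)$, hence \reff{eq:1}, \reff{eq:2} has a unique bounded continuous generalized solution $u^*$; an a priori bound $\|u^*\|_{BC(\Pi)}\le c\|f\|_{BC(\Pi)}$ follows from the Neumann series. This part is essentially classical and I expect it to go through as in \cite{ijdsde,KL1}.

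The heart of the argument is to upgrade $u^*$ to $BC^1$ and then $BC^2$. I would differentiate the integral representation in $t$. Because $\d_t\om_j$, $\d_t c_j$, etc.\ are bounded under the hypotheses on $a_j,b_{jk}$ (here the assumption $\d^2_{xt}a_j\in BC(\Pi)$ enters, ensuring $\d_t$ of the characteristic data is $C^1$ in $\xi$), one obtains a representation for $v:=\d_t u^*$ of the same structural type, $v=\tilde\F v + \tilde F$, but now the relevant trace operator has $t$-slice $G_1$ rather than $G_0$ --- this is precisely where the shift in the exponent $i$ in $c_j^i$ comes from, since differentiating $\om_j$ in $t$ produces the extra factor $-\d_t a_j/a_j^2$ in the exponent. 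The condition $\|G_1\|<1$ then yields a unique bounded solution of the $v$-problem, and one checks (using uniqueness of the $L^2$-generalized solution and the density argument in Definition~\ref{L2}, together with Theorem~\ref{evol}) that this $v$ is genuinely $\d_t u^*$. Differentiating the PDE \reff{eq:1} in $x$ then recovers $\d_x u^*$ in terms of $\d_t u^*$, $u^*$, $f$, giving $u^*\in BC^1(\Pi;\R^n)$ and the estimate \reff{est21}. Part (ii) is the same scheme applied once more: differentiating twice in $t$ produces a problem governed by $G_2$, so $\|G_2\|<1$ gives $\d_t^2 u^*\in BC(\Pi)$, then $\d^2_{xt}u^*$ and $\d_x^2 u^*$ are obtained by differentiating the PDE, yielding $u^*\in BC^2$ and \reff{est22}.

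For part (iii), the almost periodicity is inherited from the structure of the fixed point equations. I would argue that when $a_j,b_{jk},f_j\in AP(\Pi)$, the characteristic data $\om_j(\xi,x,\cdot)$, $c_j^i(\xi,x,\cdot)$, $d_j^i(\xi,x,\cdot)$ depend almost periodically on $t$ uniformly in $(\xi,x)$ --- this is a standard fact about solutions of ODEs with almost periodic coefficients and their smooth functionals. Consequently each operator $\F$, $\tilde\F$ (and its second-order analogue) maps $AP(\R;\R^n)$ into itself, and $AP$ is a closed subspace of $BC$; since the contraction fixed point lies in this closed invariant subspace, the traces $u^*_k(0,\cdot)$, $u^*_k(1,\cdot)$ are almost periodic, and then the integral representation shows $u^*(x,\cdot)$ is almost periodic uniformly in $x$. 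The same applies to $\d_t u^*$, $\d_t^2 u^*$, $\d_x u^*$, $\d^2 u^*$, so $u^*\in AP(\Pi;\R^n)$ together with all the derivatives appearing in the $BC^2$ norm.

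The main obstacle I anticipate is not any single estimate but the bookkeeping that links the three dissipativity conditions to the three regularity levels: one must verify carefully that differentiating the characteristic representation $k$ times in $t$ produces exactly the operator $G_k$ (with the correct exponent shift $-i\,\d_t a_j/a_j^2$) and no worse boundary contraction, and that the lower-order derivative terms that appear are controlled by the already-established bounds (hence the nested, cumulative form of the right-hand sides of \reff{est21}, \reff{est22}). A secondary technical point is justifying that the fixed point of the ``differentiated'' equation is the actual derivative of $u^*$ rather than merely a solution of a formally differentiated problem; this is where the exponential stability estimates \reff{est1}, \reff{est2} and Theorem~\ref{evol} are used, to identify $u^*$ with the bounded mild solution built from the evolution family and to transfer its $H^1$-regularity in the spatial variable into the picture.
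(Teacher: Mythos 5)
Your scheme gets the role of the dissipativity conditions roughly right for the \emph{regularity bootstrap}, but the existence step is where the proposal breaks down. You claim that since the trace operator has $t$-slice $G_0$ and $\|G_0\|_{{\mathcal L}(BC(\mathbb R;\mathbb R^n))}<1$, the full characteristic system has a unique bounded continuous solution ``by the Neumann series''. The integral representation \reff{oper} is $u=Cu+Du+Ff$, and only the boundary part $C$ is controlled by $G_0$; the internal coupling operator $D$ (the $\int_{x_j}^{x}d_j\sum_{k\neq j}b_{jk}u_k\,d\xi$ term) carries no smallness whatsoever, so $C+D$ is not a contraction and \reff{G_i} alone gives neither existence nor uniqueness of a bounded solution on the whole line. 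This is precisely why the paper's hypotheses include the exponential stability estimates \reff{est1}--\reff{est2}: existence and uniqueness of the bounded solution are obtained abstractly, via Theorem \ref{u_*}, from the formula $v^*(t)=\int_{-\infty}^{t}U(t,s)f(s)\,ds$, and the $H^1$-stability \reff{est2} together with resolvent bounds for $({\mathcal A}(t)+\lambda I)^{-1}$ is what puts $v^*$ into $BC(\mathbb R;H^1((0,1);\mathbb R^n))\hookrightarrow BC(\mathbb R;C([0,1];\mathbb R^n))$, i.e.\ gives a bounded continuous $u^*$ at all. In your write-up the stability assumptions appear only at the very end, to ``identify'' a fixed point with the mild solution, which is not where the work is: without them the fixed-point problem you pose need not be solvable in $BC(\Pi;\mathbb R^n)$, and uniqueness of bounded solutions can fail.

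The same structural issue recurs at the next level. The paper never solves a new fixed-point problem for $v=\partial_t u^*$; it iterates \reff{oper} to \reff{oper2}, uses the smoothing of $DC+D^2$ (from $BC$ into $BC^1_t$) and proves that $I-C$ is bijective on $BC^1_t(\Pi;\mathbb R^n)$ -- and this bijectivity needs \emph{both} $\|G_0\|<1$ and $\|G_1\|<1$, via a weighted norm, because $\frac{d}{dt}[G_0v]$ produces $G_1v'$ plus a zero-order term. Since $u^*$ already exists and is bounded, its $BC^1_t$-regularity then follows by applying $(I-C)^{-1}$ to a right-hand side that is already in $BC^1_t$; no identification of a ``formally differentiated'' solution with $\partial_t u^*$ is needed, whereas in your route that identification is a real obstruction (your $v$-problem again contains a non-small $D_1v$ term, so $\|G_1\|<1$ alone does not solve it). Part (ii) in the paper repeats this bootstrap for $w=\partial_t u^*$ with $C_1,D_1$ and needs $\|G_1\|<1$ and $\|G_2\|<1$. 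Finally, your argument for (iii) (invariance of $AP$ under the fixed-point maps plus a contraction) inherits the same gap; the paper instead proves almost periodicity by estimating $u^*(\cdot,t+h)-u^*(\cdot,t)$ through the representation $\int_{-\infty}^{t}U(t,s)g(s)\,ds$ and the a priori bound \reff{est21}, then upgrading from almost periodicity in the $L^2$-mean to uniform almost periodicity using $u^*\in BC^2$. To repair your proposal you would have to restore the evolution-family construction (or some substitute, e.g.\ a Fredholm-plus-injectivity argument as in the periodic case, which again uses \reff{est1}) as the source of existence, uniqueness and the basic $BC$ bound, and keep the $G_i$ conditions for what they actually do: inverting $I-C$ and $I-C_1$ in $BC^1_t$.
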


In the case of  time-periodic solutions, Theorem \ref{lin-smooth} can be improved 
by weakening the exponential stability assumption as well as by strengthening  the  a priori estimates. 
While in the general setting addressing  bounded solutions
 we assume exponential stability in $H^1$ (conditions (\ref{est1}) and (\ref{est2})), in the particular case of periodic solutions we assume 
 the $L^2$-exponential stability only (condition (\ref{est1})). Moreover, the a priori estimates
(\ref{est211}) and (\ref{L6}) for periodic solutions 
show more optimal regularities  between the   right hand sides  and the
 solutions, if  compared to  the a priori estimates (\ref{est21}) and (\ref{est22}) for bounded solutions.

Let $T>0$ be arbitrary fixed. Denote by $C_{per}(\Omega;\mathbb R^n)$  the vector space of all continuous maps $u : \Omega\to\mathbb R^n$
which are $T$-periodic in $t$, endowed with the usual supremum norm.
We also  introduce  spaces 
$C_{per}^1(\Omega;\mathbb R^n)=C^1(\Omega;\mathbb R^n)\cap C_{per}(\Omega;\mathbb R^n)$
and $C_{per}^2(\Omega;\mathbb R^n)=C^2(\Omega;\mathbb R^n)\cap C_{per}(\Omega;\mathbb R^n).
$

\begin{thm} \label{periodic}
	${\bf (i)}$ 
			Assume that $a_j, b_{jk}\in C^1_{per}(\Pi)$ and
	 $ f_j\in C_{per}(\Pi)\cap C^1_t(\Pi)$ for all $j,k \le n$. Moreover, suppose 
	that the evolution family $U(t,s)$  
	generated by the problem (\ref{eq:1u}), (\ref{eq:2})
	satisfies the $L^2$-exponential stability  estimate  (\ref{est1}).
	If   the dissipativity conditions (\ref{G_i})
	are fulfilled  for $i=0$ and $i=1$, then
	the system  (\ref{eq:1}), (\ref{eq:2}) has a unique  classical solution $u^*\in C_{per}^1(\Pi;\mathbb R^n)$.
	Moreover, the a priori estimate 
	\begin{equation} \label{est211}
		\|u^*\|_{BC^1(\Pi;\mathbb R^n)} \le L_3\| f \|_{BC_t^1(\Pi;\mathbb R^n)}
	\end{equation}
	is fulfilled for a constant $L_3$ not depending on $f.$
	
	${\bf (ii)}$ Assume, additionally, that $a_j, b_{jk}\in C_t^2(\Pi)$ and $ f_j\in C^1(\Pi)\cap C^2_t(\Pi)$  for all $j,k \le n$ and that the 
	dissipativity condition
	(\ref{G_i}) is true for $i=2$. Then
$u^* \in C^2_{per}(\Pi, \mathbb R^n)$.
	Moreover, the a priori estimate 
	\begin{equation} \label{L6}
	\|u^*\|_{BC^2(\Pi;\mathbb R^n)}   \le L_4
\left(\| f\|_{BC^1(\Pi;\mathbb R^n)}+\|\partial_t^2 f\|_{BC\left(\Pi;\mathbb R^n\right)}\right)		
		\end{equation}
		is fulfilled for a constant $L_4$ not depending on $f.$ 
\end{thm}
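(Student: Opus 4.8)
\medskip\noindent\textbf{Proof plan for Theorem~\ref{periodic}.}
The plan is to reduce the boundary value problem to a system of integral equations along the characteristics, to solve this system in $BC_{per}(\Pi;\mathbb R^n)$ by combining the dissipativity condition $\|G_0\|<1$ with the $L^2$-exponential stability \reff{est1}, and then to bootstrap regularity by differentiating the integral equations in $t$ and recovering the missing $x$-derivatives directly from \reff{eq:1}. First I would integrate \reff{eq:1} along the $j$-th characteristic through $(x,t)$ from the inflow abscissa $x_j$ to $x$: in the notation $c_j,d_j$ introduced above, every bounded classical solution satisfies
\[
u_j(x,t)=c_j(x_j,x,t)\,[R_ju]\bigl(\omega_j(x_j,x,t)\bigr)+\int_{x_j}^{x}d_j(\xi,x,t)\Bigl(f_j-\sum_{k\ne j}b_{jk}u_k\Bigr)\bigl(\xi,\omega_j(\xi,x,t)\bigr)\dd\xi,
\]
and, conversely, a function that is $C^1$ and satisfies this family of equations is a bounded classical solution of \reff{eq:1}, \reff{eq:2}. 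Writing the system as $u=\mathcal Cu+\mathcal Du+\mathcal Qf$ on $BC_{per}(\Pi;\mathbb R^n)$, with $\mathcal C$ the transport-and-reflection part, $\mathcal D$ the off-diagonal coupling $\sum_{k\ne j}b_{jk}u_k$ and $\mathcal Q$ the source, one passes to the boundary traces and sees that $I-\mathcal C$ reduces to $I-G_0$ on $BC_{per}(\mathbb R;\mathbb R^n)$; since $\|G_0\|<1$, a Neumann series makes $(I-\mathcal C)^{-1}$ bounded. Existence and uniqueness of the $T$-periodic solution then follow from \reff{est1}: $T$-periodicity of the coefficients gives $U(t+T,s+T)=U(t,s)$, so $U(T,0)^n=U(nT,0)$ and the monodromy $U(T,0)$ has spectral radius $\le e^{-\alpha T}<1$; hence $I-U(T,0)$ is invertible on $L^2((0,1);\mathbb R^n)$ and yields a unique $T$-periodic $L^2$-generalized solution $u^*$ with $\|u^*\|_{BC(\mathbb R;L^2)}\le C\|f\|_{BC(\mathbb R;L^2)}$, while no nontrivial $T$-periodic solution of the homogeneous problem exists. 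Since the boundary traces of $u^*$ solve $v=G_0v+(\text{continuous data})$ they are continuous, and the representation above then makes $u^*$ jointly continuous in $(x,t)$; this gives $u^*\in C_{per}(\Pi;\mathbb R^n)$ with $\|u^*\|_{BC}\le C\|f\|_{BC}$.

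Next I would upgrade to $C^1$. Differentiating the characteristic equation in $t$ (using $\partial_t\omega_j$ and $\partial_tc_j$, $\partial_td_j$), one finds — after a standard approximation argument on smooth right-hand sides — that $w:=\partial_tu^*$ satisfies an equation of exactly the same structure as $u^*$, but with the transport-and-reflection part now governed by $G_1$ instead of $G_0$ (which is precisely why $c_j^1$ carries the extra exponential factor built from $-\partial_ta_j/a_j^2$), and with a right-hand side assembled from $f$, $\partial_tf$, $u^*$ and the first $t$-derivatives of the coefficients. Since $\|G_1\|<1$, the inversion scheme of the first step applies again (the off-diagonal part handled through \reff{est1}) and gives $\partial_tu^*\in BC_{per}(\Pi;\mathbb R^n)$ with $\|\partial_tu^*\|_{BC}\le C(\|f\|_{BC}+\|\partial_tf\|_{BC})$. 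The missing derivative is then read off from \reff{eq:1}: because $|a_j|\ge a_0>0$ by \reff{eq:h1},
\[
\partial_xu_j^*=\frac1{a_j}\Bigl(f_j-\partial_tu_j^*-\sum_kb_{jk}u_k^*\Bigr),
\]
so $\partial_xu^*$ is continuous and $T$-periodic with $\|\partial_xu^*\|_{BC}\le C(\|f\|_{BC}+\|\partial_tu^*\|_{BC}+\|u^*\|_{BC})$. Hence $u^*\in C^1_{per}(\Pi;\mathbb R^n)$; collecting the three bounds yields \reff{est211}, and only $\|f\|_{BC^1_t}$ appears because $\partial_xu^*$ is never obtained by differentiating an integral along characteristics — this is the source of the improvement over \reff{est21}.

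For part (ii), under the additional smoothness and $\|G_2\|<1$, I would differentiate the characteristic equation twice in $t$: $\partial_t^2u^*$ then solves an equation of the same structure whose transport-and-reflection part is governed by $G_2$ (the factor $c_j^2$ with $-2\,\partial_ta_j/a_j^2$ in the exponent appearing because $\partial_t^2$ acts on both $\omega_j$ and the transport weight), with a right-hand side built from $f$, $\partial_tf$, $\partial_t^2f$, $u^*$, $\partial_tu^*$ and the second $t$-derivatives of the coefficients. As $\|G_2\|<1$, the inversion gives $\partial_t^2u^*\in BC_{per}(\Pi;\mathbb R^n)$ with $\|\partial_t^2u^*\|_{BC}\le C(\|f\|_{BC^1}+\|\partial_t^2f\|_{BC})$. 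The remaining second derivatives are recovered algebraically from the formula for $\partial_xu_j^*$ above: differentiating it in $t$ produces $\partial^2_{xt}u^*$ (needing only $\partial_tf$, $\partial_t^2u^*$ and the $C^1_t$ coefficients — in particular not $\partial^2_{xt}f$), and differentiating it in $x$ produces $\partial_x^2u^*$ (needing $\partial_xf$, i.e.\ $f\in C^1(\Pi)$, together with $\partial^2_{xt}u^*$ and the $C^1_x$ coefficients). All of these are continuous and $T$-periodic, so $u^*\in C^2_{per}(\Pi;\mathbb R^n)$, and combining the bounds gives \reff{L6}; periodicity of each derivative follows since the operators $G_i$ and the characteristic maps preserve $T$-periodicity.

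The main obstacle is the uniform control, over the infinitely many reflections a characteristic undergoes, of the off-diagonal coupling operator $\mathcal D$ — that is, the invertibility of $I-(I-\mathcal C)^{-1}\mathcal D$ and of its $G_1$- and $G_2$-analogues, and the identification of the solutions of the differentiated integral equations with the genuine derivatives of $u^*$; this is exactly where the interplay of $\|G_i\|<1$ with the $L^2$-exponential stability \reff{est1} is essential. It is also the reason the purely $L^2$ hypothesis suffices here — on one period one works on a compact time interval and reduces to the invertible monodromy $I-U(T,0)$ — whereas genuinely bounded solutions on all of $\mathbb R$ require the stronger $H^1$-stability \reff{est2}.
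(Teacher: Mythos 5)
Your overall architecture (characteristic integral system $u=Cu+Du+Ff$, reduction of $I-C$ to $I-G_0$ on the boundary traces, periodicity from $U(t+T,s+T)=U(t,s)$, differentiation in $t$ governed by $G_1$, $G_2$, and recovery of all $x$-derivatives algebraically from \reff{eq:1} and \reff{eq:112}-type identities) matches the paper. But there is a genuine gap at exactly the point you flag as ``the main obstacle'': you never actually control the off-diagonal coupling $D$, i.e.\ you never prove the bounded invertibility of $I-C-D$ on $C_{per}(\Pi;\mathbb R^n)$, which is what the a priori bound $\|u^*\|_{BC(\Pi)}\le C\|f\|_{BC(\Pi)}$ (and hence \reff{est211}, \reff{L6}) rests on. Your attempted shortcut is circular: the ``continuous data'' in the trace equation $z=G_0z+\tilde h$ contains $Du^*+Ff$ evaluated at the boundary, and $\|Du^*\|_{BC}$ is of the same order as the quantity $\|u^*\|_{BC}$ you are trying to estimate; smallness of $\|G_0\|$ alone does not beat $D$, and the monodromy argument via $(I-U(T,0))^{-1}$ only yields a bound in $BC(\mathbb R;L^2((0,1);\mathbb R^n))$, which does not control the line integrals along characteristics appearing in $Du^*$ pointwise. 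The paper closes this gap differently: it shows $I-C-D$ is Fredholm of index zero on $C_{per}$ (parametrix $I+(I-C)^{-1}D$, reducing matters to compactness of $D^2$ and $DC$, which follows from their smoothing property and Arzel\`a--Ascoli), proves injectivity by combining the $L^2$-exponential stability \reff{est1} with $T$-periodicity (a periodic solution of the homogeneous problem must decay, hence vanish), and then gets boundedness of $(I-C-D)^{-1}$ from bijectivity. Some such compactness-plus-uniqueness (or equivalent) mechanism is indispensable and is absent from your plan.

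A secondary gap: from your monodromy construction you only have an $L^2$-generalized $T$-periodic solution, and for such a function the statement ``its boundary traces solve $v=G_0v+(\text{continuous data})$'' and the claim that it satisfies the characteristic integral system pointwise are not justified — an $L^2(0,1)$-valued function has no pointwise traces to begin with. The paper instead obtains the periodic solution as the bounded \emph{classical} solution $v^*(t)=\int_{-\infty}^tU(t,s)f(s)\,ds$ of the abstract equation (Theorem \ref{u_*}, which uses $f\in C^1_t$ and only \reff{est1}); since $v^*(t)\in D(\mathcal A)\subset H^1((0,1);\mathbb R^n)$ and $v^*$ is continuous into $H^1$ and periodic, one gets $u^*\in C_{per}(\Pi;\mathbb R^n)$ and the pointwise validity of \reff{oper} legitimately. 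Your plan would be repaired by replacing the monodromy step with this construction (or by proving the trace/representation step for mild solutions separately) and by supplying the Fredholm-plus-injectivity argument above; the remaining bootstrap to $C^1_{per}$ and $C^2_{per}$, including the identification of the solutions of the differentiated integral equations with $\partial_tu^*$ and $\partial_t^2u^*$, then goes through essentially as you describe.
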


\subsection{Relation to previous work}
 
The current paper extends the program
suggested in~\cite{KRT}
for constructing small global bounded smooth solutions to 1D boundary value problems associated with quasilinear first order hyperbolic systems. 
The extension  targets the linearized problems
 and goes beyond the scope of  \cite{KRT}, where the reflection boundary conditions were only
 of the smoothing type. 

Our approach  involves the approximation of  $C^2$-solutions to quasilinear problems by  $C^2$-solutions to linear problems. 
Note that the approximation  idea has been successively applied in the literature, see e.g., \cite{Li, Li1, Qu0, Qu1}. 

In the present paper we focus on establishing conditions that ensure the existence of $C^2$-solutions for general linear boundary value problems of the type (\ref{eq:1}), (\ref{eq:2}). 
As an important assumption, we require the evolution operator of the homogeneous problem to be exponentially stable 
(in \cite{KRT} it is supposed to be exponentially dichotomous).
In other words,   we suppose that  the  linear problem demonstrates a regular behavior, which can be used  to  approximate  smooth solutions (without singularities  as blow-ups and shocks) of the nonlinear problem.

Note that such regular behavior of the evolution operator is known to be equivalent to the existence of a unique mild solution to the nonhomogeneous problem \cite[Theorem 1.1]{Latushkin}, for any given right hand side.  
Having at hands the mild solution, the following two important results (of general nature)  for linear systems have to be obtained: higher regularity of mild solutions and stability under perturbations in coefficients.
While the regularity and perturbation results in \cite{KRT} essentially use a smoothing property of the evolution 
operator (see \cite{kmit,KL1} for the smoothing results), in the current paper we have to overcome
 the well-known phenomenon of ``loss of smoothness''. To handle the loss of smoothness, we  impose certain dissipativity conditions.
Unlike \cite{KRT}, where the evolution operator was assumed to have exponential dichotomy (in particular, exponential stability) in $L^2$, our current assumption is that the system is exponentially stable in the higher-order regularity space $H^1$. This  seems to be essential, since the evolution operator  is not necessarily eventually differentiable.
Specifically, for $C^k$-solvability ($k\ge 1$) of the problem (\ref{eq:1}), (\ref{eq:2}), we assume that the system is $H^1$-exponentially stable, and further, that $k+1$ dissipativity conditions are satisfied.
Remark that the number of dissipativity conditions is related to the regularity of solutions. This connection arises due to the nonautonomous nature of the hyperbolic system under consideration, 
in contrast to  autonomous settings. 
Note that
 in the case of smoothing boundary conditions, the evolution operator is automatically dissipative and, therefore, 
 no dissipativity conditions are needed to be imposed. 
 
We also remark that the question of exponential stability in nonlinear problems is typically related to higher-order regularity spaces such as $H^1$, $H^2$, and $C^1$, as discussed, e.g., in  \cite{BC2, DGL, Ha, HSh}.

 Problems of the type (\ref{eq:1}), (\ref{eq:2}) have been investigated, e.g.,   in 
\cite{KM,KR,jee,KRT,Li, Qu0, Qu1, Zhang}. Our present results, therefore, extend 
the class of linear(ized) problems  for which the existence of bounded 
smooth solutions in a strip  can be proved under natural assumptions.
This extention is achieved due to the  generality of the system (\ref{eq:1}), (\ref{eq:2})
as well as due to the conditions considered for (\ref{eq:1}) and (\ref{eq:2}).

\subsection{On the exponential stability and dissipativity conditions}
\subsubsection{Examples}
Since our main	
solvability conditions (\ref{est1}), (\ref{est2}), and (\ref{G_i}) stated in Theorem \ref{lin-smooth}
(or the conditions (\ref{est1}) and (\ref{G_i}) stated in Theorem \ref{periodic}) are formulated quite generally, we here give two examples
showing that these conditions, first, are consistent
 and, second, allow large coefficients in the differential equations and/or in the
boundary conditions. The  examples are given for  time-periodic problems but can also be easily extended to
bounded solutions. The only difference is that in the latter case we have to check the exponential stability in $H^1$ instead of
 in $L^2$ as in the former case.

{\it Example 1.} 
Consider a time-periodic problem for the hyperbolic system
\begin{equation} \label{eq:1ex}
	\begin{array}{ll}
		\partial_t u_1 + \partial_x u_1  = 0,\qquad
		\partial_t u_2 -\partial_x u_2  +\alpha u_2 -u_1 =0 	
	\end{array}
\end{equation}
with the boundary conditions
\begin{equation}  \label{eq:3ex}
	u_1(0,t)=r_1u_2(0,t), \quad	u_2(1,t)=r_2u_1(1,t).
\end{equation}
To fulfill the assumptions of Theorem \ref{periodic}, we have  to show that, for appropriately chosen~$\alpha$,  $r_1$, and $r_2$, the system (\ref{eq:1ex})--(\ref{eq:3ex}) is exponentially stable in $L^2$ and that the  dissipativity conditions (\ref{G_i}) are satisfied.
Indeed, the corresponding eigenvalue problem  
reads
\begin{equation}
\label{exampleevp}
\begin{array}{l}
	v_1'-\mu v_1=0, \quad	v_2'+(\mu-\alpha) v_2 + v_1 = 0,\\
	v_1(0)=r_1v_2(0), \quad	v_2(1)= r_2v_1(1),
\end{array}
\end{equation}
where $\mu$ is a spectral parameter. 
By direct calculations, we can see that the system (\ref{exampleevp}) is equivalent to
\begin{eqnarray*}
	& &v_1(x)=ce^{\mu x},\quad v_2(x)=\frac{c}{\alpha-2\mu}\left(e^{\mu x} +\left[(\alpha-2\mu)r_1^{-1}-1\right]e^{(\alpha-\mu)x}\right), \\
	& & r_2(\alpha-2\mu)-1=\left[(\alpha-2\mu)r_1^{-1}-1\right]	e^{\alpha-2\mu},
\end{eqnarray*}
where $c=v_1(0)$ is a nonzero complex constant.
Setting $\alpha-2\mu=\xi+i\eta$ with $\xi \in \mathbb R$ and (without loss of generality) $\eta>0$ and looking at the real and the imaginary parts of the last equality, we get
\begin{equation}\label{eta}
\eta=\sqrt{\frac{(\xi- r_1)^2e^{2\xi}-(\xi r_2-1)^2r_1^2}{r_1^2r_2^2-e^{2\xi}}}
\end{equation}
and
\begin{equation}
\label{chareq}
\sin \eta=\frac{(1-r_1r_2)\eta r_1}{{e^\xi}\left[(\xi -r_1)^2+\eta^2\right]}.
\end{equation}
It is easy to see that,  besides of the zero solution, the equation (\ref{chareq}) has  a number of
solutions $\xi_j$ lying on a bounded  interval, say $|\xi_j|\le\bar{\xi}$ for some $\bar\xi>0$. This follows from the formulas (\ref{eta}), (\ref{chareq}) and from the fact that 
$$
\frac{(\xi- r_1)^2e^{2\xi}-(\xi r_2-1)^2r_1^2}{r_1^2r_2^2-e^{2\xi}}\to-\infty\ \mbox{ as }\ \xi\pm\infty.
$$
Hence,  
the  eigenvalues of (\ref{exampleevp}) are of the form
\begin{equation}\label{spectrum}
\mu_j^\pm(\alpha)=\frac{1}{2}\left(\alpha-\xi_j\pm i\sqrt{\frac{(\xi_j- r_1)^2e^{2\xi_j}-(\xi_j r_2-1)^2r_1^2}{r_1^2r_2^2-e^{2\xi_j}}}\right).
\end{equation} 
If $\alpha$ is chosen to satisfy the inequality $\alpha<-\bar \xi$, then the real parts of all eigenvalues
are negative.  

We now use the spectral mapping theorem (see \cite[Theorem 2.1]{Lichtner})  
providing a spectral criterion of  the  exponential stability of    $C_0$-semigroups in $L^2$. Combined with \cite{CoronBastin}, 
this criterion states that the 
spectrum of (\ref{exampleevp})  determines the spectrum of the semigroup generated by the original problem (\ref{eq:1ex})--(\ref{eq:3ex}). 
As the problem (\ref{eq:1ex})--(\ref{eq:3ex}) is autonomous, from  Theorem~\ref{evol} we conclude that the operator 
$\mathcal A$ associated with (\ref{eq:1ex})--(\ref{eq:3ex}) (and defined by (\ref{A})--(\ref{D(A)})
below) generates a $C_0$-semigroup  on $L^2$. 
This means, in particular, that the operator $\mathcal A$ is  closed.
It is known from \cite[Theorem 6.29, p.~187]{kato} that the spectrum of the operator $\mathcal A$ consists of  finitely or
countably many eigenvalues, each of finite multiplicity. Furthermore,  it has no finite
limit points. This implies that the whole spectrum of $\mathcal A$ is given by the formula (\ref{spectrum}). Due to the
spectral mapping theorem, the problem (\ref{eq:1ex})--(\ref{eq:3ex}) is exponentialy  stable in  $L^2$
for all $\alpha<-\bar \xi$.

It remains to check the condition (\ref{G_i}). 
We suppose, additionally to  $\alpha<-\bar \xi$, that the boundary coefficients $r_1$ and $r_2$ fulfill the conditions
$|r_1|<1$ and $|r_2|e^{-\alpha}<1$.
Note first that in the autonomous setting   the condition  (\ref{G_i}) is the same for $i=0,1,2$. 
It, therefore, suffices  to check this condition  for $i=0$. In this case, (\ref{G_i}) follows from the inequalities, which are true
for any $\psi\in BC(\mathbb R;\mathbb R^n)$ with $\|\psi\|_{BC(\mathbb R;\mathbb R^n)}=1$ and
 for all $t\in\mathbb R$, namely
$$
\left|[G_0\psi]_1(t)\right|\le |r_1|<1,\quad\left|[G_0\psi]_2(t)\right|\le |r_2|e^{-\alpha}<1.
$$

This example also shows that, if  the absolute values of the  coefficients of the hyperbolic system are large, then the dissipativity conditions are ensured if  the boundary coefficients $r_1$ and $r_2$ are small enough.

\!{\it Example 2.}	We consider a time-periodic problem for the following nonautonomous decoupled  $2\times 2$-hyperbolic system:
	\begin{eqnarray}
\partial_t u_1 + \left(1-\frac{1}{4}\sin{t}\right)\partial_x u_1 + 3 u_1 = 0, \quad  \partial_t u_2 - (3+x)\partial_x u_2  = 0 \label{ex1}
	\end{eqnarray}
endowed with the boundary conditions (\ref{eq:3ex}). 
The aim of this example is, first, to check  the main solvability conditions of this problem
 in the nonautonomous setting and, second, to show  that at least one of the boundary coefficients $r_1$ and $r_2$ can
be  large. To this end, we use the Lyapunov approach.
Let
	$V(t) = \int_0^1 \left(u_1^2(x,t) + u_2^2(x,t)\right) d x.$ The derivative of $V(t)$
	along the trajectories of the system (\ref{ex1}), (\ref{eq:3ex}) is computed directly as follows:
	\begin{eqnarray*}
		\frac{dV(t)}{dt} &=& 
		 2\int_0^1 \left[u_1\left(\left(\frac{1}{4}\sin{t}-1\right)\partial_x u_1- 3u_1\right) 
		+ (3+x)u_2\partial_x u_2\right] d x \\
		& = &\left[\left(\frac{1}{4}\sin{t}-1\right) + 4 r_2^2\right]u_1^2(1,t) + 
		\left[-3 +r_1 ^2\left(1-\frac{1}{4}\sin{t}\right) \right] u_2^2(0,t)\\ && - \int_0^1\left(6 u_1^2 + u_2^2\right) d x.
	\end{eqnarray*}
	Putting $r_1 = \sqrt{12/5}>1$ and $r_2 = \sqrt{3}/4<1$, we have
	$$\left[\left(\frac{1}{4}\sin{t}-1\right)+ 4 r_2^2\right] u_1^2(1,t) + 
	\left[ -3 +r_1^2 \left(1-\frac{1}{4}\sin{t}\right)\right]u_2^2(0,t) \le 0.$$ 
	It follows that
	$\frac{dV(t)}{dt} \le  -V(t)$ and, consequently,
	$$\|u_1(\cdot,t)\|_{L^2(0,1)}^2 + \|u_2(\cdot,t)\|_{L^2(0,1)}^2 \le 
	e^{-(t-s)}\left(\|u_1(\cdot,s)\|_{L^2(0,1)}^2 + \|u_2(\cdot,s)\|_{L^2(0,1)}^2\right).$$
	 for $t\ge s.$ In other words, the system (\ref{ex1}), (\ref{eq:3ex}) is exponentially stable in $L^2$.

The  dissipativity conditions  (\ref{G_i}) are fulfilled, which can again be seen by the direct computation. It suffices to note that, for all $\psi\in BC(\mathbb R;\mathbb R^n)$ with $\|\psi\|_{BC(\mathbb R;\mathbb R^n)}=1$, for all $t\in\mathbb R$ and  for $i = 0,1,2$,  we have the following inequalities:
$$
\left|[G_i\psi]_1(t)\right|\le \sqrt{\frac{12}{5}}\exp{\left(-\frac{12}{5} + i\frac{4}{9}\right)} <1\quad\mbox{and}\quad
\left|[G_i\psi]_2(t)\right|\le  \frac{\sqrt{3}}{4}.
$$

\subsubsection{How to check the exponential stability  conditions: overview of known results}
  Examples $1$ and $2$  show that the dissipativity conditions (\ref{G_i}) are relatively easy to check
by direct computation.
For the exponential stability assumption, (sufficient) conditions
are desirable.  In this subsection we  survey some known results providing  a number  of such conditions. Note that those
results concern mostly the autonomous setting.

As we have already seen in Example 1, a useful  
tool is provided by the  spectral mapping theorem giving powerful {\it spectral criteria}  for deciding whether the system is exponentially stable. 
The applicability of the spectral criterion to our problem  follows from \cite{CoronBastin,Lichtner,Neves,Renardy}.
Another, {\it resolvent criterion} for the uniform exponential stability 
of    $C_0$-semigroups on Hilbert spaces
is given by 
 \cite[Theorem 1.11, p. 302]{Engel}. This theorem states that a $C_0$-semigroup $T(\cdot)$ on a Hilbert space $H$
 is uniformly exponentially stable if and only if there exists a constant
 $M >0$ such that
 $$
 \|(A-\mu I)^{-1}\|_{\mathcal{L}(H)}< M\ \mbox{ for all } \  \mu\in\mathbb C \  \mbox{ with } \   \Re\mu>0,
 $$
 where $A$ is a generator of $T(\cdot)$.
 
A group of sufficient conditions for the exponential stability 
  stems from  the  
{\it Lyapunov approach}
(see, e.g. \cite{BCbook,BCN,CoronBastin,DGL,DGL10,HSh1,HSh}). Being
natural  from the physical point of view, this approach provides   physically motivated conditions  on the data.
In 
\cite{BCbook} the authors deal with the $H^2$-exponential stability and provide two sufficient conditions, the so-called ``internal condition'' and  ``boundary condition''. The former is
based on the energy-like Lyapunov  function and the latter is formulated in terms of the boundary coefficients. 
Most results on the Lyapunov approach known in the literature deal with autonomous problems, but, as we showed in Example 2,  this approach is also applicable to the nonautonomous setting.

Finally,  we mention the so-called {\it backstepping approach}. It provides a suitable
transformation of the original system (by shifting all eigenvalues
accordingly) to the so-called ``target system'', which is more convenient to investigate the exponential stability, see e.g.~\cite{KrSm}.

\section{Proof of Theorem \ref{evol}}
\renewcommand{\theequation}{{\thesection}.\arabic{equation}}
\setcounter{equation}{0}

The proof will be established through a sequence of Lemmas \ref{km}--\ref{C1} presented below. Importantly, these lemmas extend beyond the homogeneous case to  the nonhomogeneous scenario,  which will be used later.

\begin{defn}\rm
	A continuous function $u:\Pi_s\to \mathbb R^n$ is called a  {\it piecewise continuously differentiable  solution}
	to the problem 
	(\ref{eq:1}), (\ref{eq:2}), (\ref{eq:in}) if it is  continuously differentiable  in $\Pi_s$ almost everywhere (excepting at most  countable number of
	characteristic curves of  (\ref{eq:1}), denoted by $J_s$) and satisfies the problem  
	(\ref{eq:1}), (\ref{eq:2}), (\ref{eq:in}) everywhere in $\Pi_s\setminus J_s$.
\end{defn}
Note that the derivatives of a piecewise continuously differentiable  solution
restricted to a compact subset of $\Pi_s$
have not more than finite number of discontinuities of first order  on~$J_s$.

\begin{lemma}\label{km}\cite{ijdsde}
	Let    $a_j, b_{jk}, f_j\in C_x^1(\Pi)$ for all $j,k\le n$.
	 Assume that the condition (\ref{eq:h1})  is fulfilled and   the initial function $\varphi$ belongs to $C^1([0,1];\mathbb R^n)$ and
 fulfills the zero order compatibility conditions
	(\ref{zero}).
	Then the problem (\ref{eq:1}), (\ref{eq:2}), (\ref{eq:in}) has a unique piecewise continuously differentiable  solution in $\Pi_s$.
	\end{lemma}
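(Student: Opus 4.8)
The plan is to reduce \reff{eq:1}, \reff{eq:2}, \reff{eq:in} to a closed system of integral equations along the characteristics, solve it by a contraction argument, and then read off the piecewise regularity from the integral representation. First I would integrate the $j$-th equation of \reff{eq:1} along the $j$-th characteristic through a fixed point $(x,t)\in\Pi_s$. Putting $v_j(\xi)=u_j(\xi,\om_j(\xi,x,t))$, one has $\frac{d}{d\xi}v_j=\d_x u_j+a_j^{-1}\d_t u_j=a_j^{-1}\big(f_j-\sum_k b_{jk}u_k\big)$ on the characteristic, so multiplying by $c_j$ and integrating from the point $p_j=p_j(x,t)$ where the characteristic first meets the boundary of $\Pi_s$ (as $\xi$ moves away from $x$) up to $\xi=x$ gives
\[
u_j(x,t)=c_j(p_j,x,t)\,\gamma_j(x,t)+\int_{p_j}^{x}d_j(\xi,x,t)\Big(f_j-\sum_{k\ne j}b_{jk}u_k\Big)(\xi,\om_j(\xi))\,d\xi,
\]
where $\gamma_j(x,t)$ equals $\varphi_j$ evaluated where the characteristic meets $\{t=s\}$ if this happens inside the open strip, and equals the trace prescribed by \reff{eq:2} (hence an expression in the traces of all components of $u$) if the characteristic meets $\{x=x_j\}$ first. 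Writing $u=\Phi[u]$ for the resulting operator, a piecewise $C^1$ solution of \reff{eq:1}, \reff{eq:2}, \reff{eq:in} is exactly a sufficiently regular fixed point of $\Phi$, and any continuous fixed point is automatically such a solution.

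Next I would establish existence and uniqueness of a continuous fixed point. On a short strip $[0,1]\times[s,s+\delta]$ the operator $\Phi$ maps $BC([0,1]\times[s,s+\delta];\mathbb R^n)$ into itself, and, since the coupling and boundary terms are integrated over characteristic arcs of length at most $1$ while the operators $R_j$ are bounded, a Gronwall-type estimate shows that $\Phi$ is a contraction for $\delta$ small; crucially, $\delta$ depends only on $a_0$ and on the $C^0$-norms of $a$ and $b$ over $\Pi$, not on $\varphi$ or $s$. Thus $\Phi$ has a unique continuous fixed point on $[0,1]\times[s,s+\delta]$; since $u(\cdot,s+\delta)$ is continuous and still satisfies the zero order compatibility \reff{zero} (the boundary conditions hold up to $t=s+\delta$), a patching argument yields a unique continuous solution on all of $\Pi_s$, whence uniqueness also in the piecewise $C^1$ class.

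Finally I would extract the piecewise $C^1$ structure. Let $J_s$ be the family of characteristics built by starting with the characteristics of families $1\le j\le m$ through the corner $(0,s)$ and of families $m<j\le n$ through $(1,s)$, and adjoining, whenever a listed characteristic meets $\{x=0\}$ or $\{x=1\}$, all characteristics entering $\Pi_s$ through that point. By \reff{eq:h1} each such reflection creates finitely many new characteristics and the reflection times accumulate only at $+\infty$, so $J_s$ is at most countable and locally finite in $\Pi_s$. On each connected component of $\Pi_s\setminus J_s$ the data entering $u=\Phi[u]$ — $\varphi$, the relevant pieces of the boundary traces, and the source integrals — are $C^1$ because $a_j,b_{jk},f_j\in C^1_x(\Pi)$ and $\varphi\in C^1$, so differentiating the representation and running the contraction argument in $C^1$ on that component gives $u\in C^1$ there, with one-sided derivative limits on $J_s$. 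Global continuity of $u$ across $J_s$ is exactly where \reff{zero} is used: it forces the initial and boundary prescriptions to agree at $(0,s)$ and $(1,s)$, and this is propagated by $\Phi$.

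I expect the main obstacle to be this last step: the combinatorial control of $J_s$, together with a careful check that the boundary operator $R_j$ — which couples the trace of $u_j$ at one endpoint to the traces of all components at both endpoints — transports a derivative jump of an incoming characteristic onto the outgoing characteristics of the complementary families, so that only $C^0$, but in general not $C^1$, survives across $J_s$ (a globally $C^1$ solution would require the first order compatibility conditions, which are not assumed). Everything else is a routine method-of-characteristics fixed-point argument.
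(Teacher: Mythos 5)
Your overall strategy (integral equations along characteristics, short-time fixed point, patching, and tracking derivative jumps along the countable, locally finite family of reflected characteristics emanating from the corners) is exactly the standard route; the paper itself does not reprove this lemma but quotes it from \cite{ijdsde}, whose argument is of this type. However, there is one concrete step in your write-up that fails as stated: the claim that $\Phi$ is a contraction on $[0,1]\times[s,s+\delta]$ for small $\delta$ ``since the coupling and boundary terms are integrated \dots while the operators $R_j$ are bounded.'' The reflection term $c_j(x_j,x,t)\,R_ju(\cdot,\omega_j(x_j))$ is \emph{not} an integrated term: it is a zero-order evaluation whose Lipschitz constant in the sup norm is of size $\max_j\sum_k|r_{jk}|$ (times $|c_j|\approx 1$), and this does not shrink as $\delta\to 0$. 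The lemma makes no smallness assumption on the $r_{jk}$ --- indeed, the dissipativity conditions (\ref{G_i}) appear in the paper as \emph{additional} hypotheses in Theorems \ref{lin-smooth} and \ref{periodic} precisely because mere boundedness of $R$ gives no contraction --- so for $\|R\|\ge 1$ your map $\Phi$ is not a contraction on any time strip, and the existence/uniqueness step collapses as written.

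The repair is standard and stays within your framework, but it is the missing idea: choose $\delta<1/\sup_{\Pi}\|a\|$, so that a characteristic traced backward from any point of the strip $[0,1]\times[s,s+\delta]$ can meet the lateral boundary at most once before reaching $\{t=s\}$. Then the traces $u_k(0,\tau)$, $k>m$, and $u_k(1,\tau)$, $k\le m$, appearing in $R_j u(\cdot,\tau)$ are themselves given by the representation with $\gamma_k=\varphi_k(\cdot)$ plus an integral over a characteristic arc of time-length at most $\delta$. Substituting this once (equivalently, estimating $\Phi^2$ rather than $\Phi$) yields a map whose dependence on $u$ occurs only through integrals over such short arcs, hence with Lipschitz constant $O\bigl(\delta(1+\|R\|)\|b\|\,\sup|a_j|^{-1}\bigr)$; this is a contraction for small $\delta$, and $\delta$ still depends only on $a_0$, $\sup\|a\|$, $\sup\|b\|$, $\|R\|$ --- not on $\varphi$ or $s$ --- so your patching argument survives. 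The same one-reflection bookkeeping is what makes the uniqueness (Gronwall) estimate and the piecewise-$C^1$ bootstrap on $\Pi_s\setminus J_s$ go through; note also that with the assumption $a_j,b_{jk},f_j\in C^1_x(\Pi)$ only, you should derive the integral equation for $\partial_x u$ and recover $\partial_t u$ from the differential equation itself, rather than asserting joint $C^1$ regularity of the data in both variables.
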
 
 
 \begin{lemma}\label{H1} \cite[Lemma 4.2]{KRT} 
	Let $a_{j}$, $b_{jk},$ $f_j\in BC^1(\Pi)$.
	Assume that the condition (\ref{eq:h1})  is fulfilled and  the initial function  $\varphi$  belongs to $C^1([0,1];\mathbb R^n)$ and
	  fulfills the zero order compatibility conditions
	(\ref{zero}). Then 
	there exist constants $K_1$ and $\nu_1$ such that
	the  piecewise continuously differentiable
	solution  $u$ to the problem  (\ref{eq:1}), (\ref{eq:2}), (\ref{eq:in})
	fulfills the  estimate
\begin{equation}\label{eq:apr4}
	\begin{array}{ll}
		\|u(\cdot,t)\|_{H^1\left((0,1);\mathbb R^n\right)}+
		\|\partial_tu(\cdot,t)\|_{L^2\left((0,1);\mathbb R^n\right)}\\ [2mm]
		\qquad \le  K_1 e^{\nu_1(t-s)}\left(\|\varphi\|_{H^1((0,1);\mathbb R^n)}+
		\|f\|_{BC^1\left(\mathbb R, L^2\left((0,1);\mathbb R^n\right)\right)}
		\right)
	\end{array}
\end{equation}
	for all  $t\ge s$.
\end{lemma}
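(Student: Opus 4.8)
The plan is to upgrade the $L^{2}$-well-posedness of Theorem~\ref{evol0} to an $H^{1}$-estimate by differentiating the problem in $t$ and then using the equation (\ref{eq:1}) itself to express $\partial_{x}u$ through $\partial_{t}u$; since only an exponentially growing bound is claimed, no smallness of the operators $R_{j}$ is needed. Throughout I write $L^{2}=L^{2}((0,1);\mathbb R^{n})$, $H^{1}=H^{1}((0,1);\mathbb R^{n})$, and let $C$ stand for a constant depending only on $a_{0}$ and the $BC^{1}(\Pi)$-norms of $a,b$. First I would record an $L^{2}$-estimate for $u$ itself: for $\varphi\in C^{1}([0,1];\mathbb R^{n})$ satisfying (\ref{zero}) the piecewise continuously differentiable solution of Lemma~\ref{km} coincides with the $L^{2}$-generalized solution of Definition~\ref{L2}, so one may write $u(\cdot,t)=U(t,s)\varphi+\int_{s}^{t}U(t,r)f(\cdot,r)\dd r$ and invoke part~(c) of Theorem~\ref{evol0}; the time integral contributes at most a polynomial factor in $t-s$, absorbed into a slightly larger exponent, which gives
\[
\|u(\cdot,t)\|_{L^{2}}\le K_{2}e^{\nu_{2}(t-s)}\bigl(\|\varphi\|_{L^{2}}+\|f\|_{BC(\mathbb R,\,L^{2})}\bigr),\qquad t\ge s ,
\]
for suitable $K_{2},\nu_{2}$.

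Next I would set $v=\partial_{t}u$ (rigorously, first argue with the difference quotients $h^{-1}(u(\cdot,t+h)-u(\cdot,t))$ and let $h\to0$) and differentiate (\ref{eq:1}), (\ref{eq:2}) in $t$: the function $v$ solves a hyperbolic problem with the \emph{same} principal part $a$ and the \emph{same} boundary operators $R_{j}$,
\[
\partial_{t}v+a(x,t)\partial_{x}v+b(x,t)v=\partial_{t}f-(\partial_{t}a)\partial_{x}u-(\partial_{t}b)u ,
\]
with initial value $v(\cdot,s)=f(\cdot,s)-a(\cdot,s)\partial_{x}\varphi-b(\cdot,s)\varphi$, so $\|v(\cdot,s)\|_{L^{2}}\le C(\|\varphi\|_{H^{1}}+\|f\|_{BC(\mathbb R,\,L^{2})})$. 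The key move is to remove $\partial_{x}u$ from the right-hand side: by (\ref{eq:h1}) the matrix $a=\diag(a_{j})$ is boundedly invertible, so $\partial_{x}u=a^{-1}(f-\partial_{t}u-bu)$, and substituting this recasts the $v$-equation as $\partial_{t}v+a\partial_{x}v+\tilde b\,v=\hat f$ with $\tilde b=b-(\partial_{t}a)a^{-1}$ still bounded and continuous and, by the first step,
\[
\|\hat f(\cdot,t)\|_{L^{2}}\le C\bigl(\|f\|_{BC^{1}(\mathbb R,\,L^{2})}+\|u(\cdot,t)\|_{L^{2}}\bigr)\le C\,K_{2}e^{\nu_{2}(t-s)}\bigl(\|\varphi\|_{H^{1}}+\|f\|_{BC^{1}(\mathbb R,\,L^{2})}\bigr).
\]

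Applying the $L^{2}$-estimate of the first step to the $v$-problem then yields $\|\partial_{t}u(\cdot,t)\|_{L^{2}}\le K_{3}e^{\nu_{3}(t-s)}(\|\varphi\|_{H^{1}}+\|f\|_{BC^{1}(\mathbb R,\,L^{2})})$ for suitable $K_{3},\nu_{3}$; together with $\|\partial_{x}u(\cdot,t)\|_{L^{2}}\le a_{0}^{-1}(\|f(\cdot,t)\|_{L^{2}}+\|\partial_{t}u(\cdot,t)\|_{L^{2}}+C\|u(\cdot,t)\|_{L^{2}})$ and the bound of the first step, adding the three estimates produces (\ref{eq:apr4}) with $K_{1},\nu_{1}$ the maxima of the constants encountered.

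The main obstacle is the last application of the $L^{2}$-estimate: the zero-order coefficient $\tilde b$ of the differentiated system contains $\partial_{t}a$, which under the hypothesis $a\in BC^{1}(\Pi)$ is only continuous, so Theorem~\ref{evol0} --- stated for $BC^{1}$-coefficients --- does not apply verbatim. I would handle this either by noting that the construction behind Theorem~\ref{evol0}, a contraction on the characteristic integral representation combined with a Gronwall argument, requires only boundedness and continuity of the zero-order term, or by keeping the unchanged $BC^{1}$-coefficients and carrying the $t$-difference quotients through the entire argument, their equation retaining the original principal and zero-order parts. In both routes the feature that keeps the estimate non-circular is the a posteriori recovery $\partial_{x}u=a^{-1}(f-\partial_{t}u-bu)$ rather than treating $\partial_{x}u$ as an independent unknown; this is precisely where the strict hyperbolicity (\ref{eq:h1}) is used, and it is how the ``loss of smoothness'' mentioned in the introduction is circumvented here, the first-order part of the equation never actually being differentiated.
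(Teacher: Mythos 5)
The paper never proves this lemma --- it is imported verbatim from \cite{KRT} (Lemma 4.2) --- so your argument can only be measured against the standard route and against the analogous computation the paper itself performs later for $\partial_t u^*$ in Section 3.4. Your mechanism is exactly that one: differentiate the problem in $t$ (the boundary operators are unchanged because the $r_{jk}$ are constants), eliminate $\partial_x u$ via $\partial_x u=a^{-1}(f-\partial_t u-bu)$ so that the differentiated system has the same principal part, zero-order coefficient $b-(\partial_t a)a^{-1}$ and a right-hand side controlled by $\|u(\cdot,t)\|_{L^2}$ and $\|f\|_{BC^1(\mathbb R;L^2)}$ --- this is precisely the structure of (\ref{eq:112}) --- and finally recover $\partial_x u$ from the equation using (\ref{eq:h1}). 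Combined with the Duhamel/exponential bound of Theorem~\ref{evol0} for the undifferentiated problem, this yields (\ref{eq:apr4}); the strategy is sound and almost certainly the intended one.

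Two points need more care than your sketch gives them. First, as you yourself flag, the zero-order coefficient of the differentiated system is only bounded and continuous, so Theorem~\ref{evol0} as quoted does not apply; your proof therefore rests on an unproved strengthening of the $L^2$ estimate (for $BC^1$ diagonal part but merely $BC^0$ lower-order part), which is true but must be formulated and proved, e.g.\ by the characteristic integral representation plus Gronwall, rather than invoked as ``the internals of \cite{KL1}''. Second, the difference-quotient regularization as you describe it is circular: after the elimination, the right-hand side of the equation for $v_h(\cdot,t)=h^{-1}\bigl(u(\cdot,t+h)-u(\cdot,t)\bigr)$ contains $\partial_t u(\cdot,t+h)$, not $v_h(\cdot,t)$, so the uniform-in-$h$ bound you need presupposes the very quantity you are estimating. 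The clean repair is to dispense with difference quotients: by Lemma~\ref{km} the solution is piecewise $C^1$, so $v=\partial_t u$ exists and is locally bounded off a locally finite family of characteristics, satisfies the differentiated system and boundary conditions classically there (no first-order compatibility is needed, since jumps along characteristics are admissible in this framework), and the characteristic/Gronwall estimate can be run for $v$ directly; alternatively, one may first prove the estimate for smoother, compatible data and pass to the limit. With these two repairs your argument closes, and the resulting constants depend, as claimed, only on $a_0$ and the $BC^1$-norms of $a$ and $b$.
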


 \begin{lemma}\label{C1}
 		Let $a_{j}$, $b_{jk},$ $f_j\in BC^1(\Pi)$ for all $j,k\le n$.
 Assume that the condition (\ref{eq:h1} )  is fulfilled and the function  $\varphi$ belongs to $ H^1\left((0,1);\mathbb R^n\right)$ and fulfills the zero order compatibility conditions (\ref{zero}).
 	Then the 
 	$L^2$-generalized solution $u$ to the problem  (\ref{eq:1}), (\ref{eq:2}), (\ref{eq:in})
 	belongs to  $C([s,t]; H^1\left((0,1);\mathbb R^n\right))$ and to $C^1([s,t]; L^2\left((0,1);\mathbb R^n\right))$ for any $t\ge s$.
\end{lemma}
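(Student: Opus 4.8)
The plan is to obtain the $H^1$-regularity of the $L^2$-generalized solution by a density-and-limit argument, using Lemma \ref{H1} as the key a priori estimate to control the approximating sequence. First I would fix $\varphi \in H^1((0,1);\mathbb R^n)$ satisfying the zero order compatibility conditions \reff{zero}, and choose a sequence $(\varphi^l)$ with $\varphi^l \in C_0^1([0,1];\mathbb R^n)$ such that $\varphi^l \to \varphi$ in $H^1((0,1);\mathbb R^n)$. The technical point here is that the approximating functions must be compatible: since elements of $C_0^1$ automatically satisfy \reff{zero}, this is not an obstruction, but one must verify that $H^1$-convergence of compatible functions to a compatible limit can indeed be arranged (this uses that the compatibility conditions \reff{zero} define a closed subspace of $H^1$ of finite codimension, and a standard mollification/cutoff near the endpoints). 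For each $l$, Lemma \ref{km} provides a piecewise $C^1$-solution $u^l$ to \reff{eq:1}, \reff{eq:2}, \reff{eq:in} with initial data $\varphi^l$, and by definition of the $L^2$-generalized solution (Definition \ref{L2}), $u^l(\cdot,\theta) \to u(\cdot,\theta)$ in $L^2$ uniformly on compact $\theta$-intervals.

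Next I would apply Lemma \ref{H1} to the differences $u^l - u^p$, which solve the homogeneous analogue of the problem with initial data $\varphi^l - \varphi^p$ (note $f$ cancels). The estimate \reff{eq:apr4} yields
\begin{equation*}
\sup_{s \le \theta \le t}\left(\|u^l(\cdot,\theta) - u^p(\cdot,\theta)\|_{H^1} + \|\partial_t u^l(\cdot,\theta) - \partial_t u^p(\cdot,\theta)\|_{L^2}\right) \le K_1 e^{\nu_1(t-s)}\|\varphi^l - \varphi^p\|_{H^1},
\end{equation*}
so $(u^l)$ is Cauchy in $C([s,t];H^1((0,1);\mathbb R^n))$ and $(\partial_t u^l)$ is Cauchy in $C([s,t];L^2((0,1);\mathbb R^n))$. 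By completeness these converge to limits $v \in C([s,t];H^1)$ and $w \in C([s,t];L^2)$; the uniform $L^2$-convergence $u^l \to u$ forces $v = u$, and then $w = \partial_t u$ (differentiation is closed as an operator into the space of distributions, or: convergence of $u^l$ in $C([s,t];H^1)$ together with convergence of $\partial_t u^l$ in $C([s,t];L^2)$ identifies the limit of the time-derivatives as the time-derivative of the limit). This gives $u \in C([s,t];H^1((0,1);\mathbb R^n)) \cap C^1([s,t];L^2((0,1);\mathbb R^n))$, as claimed.

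The main obstacle I anticipate is not the limiting argument itself, which is routine once \reff{eq:apr4} is in hand, but rather the construction of a compatible $H^1$-approximating sequence in $C_0^1$ and the careful bookkeeping needed to apply Lemma \ref{H1}: the estimate there is stated for classical (piecewise $C^1$) solutions with $C^1$-compatible data, so one must be sure the differences $\varphi^l - \varphi^p$ legitimately feed into it and that the constants $K_1, \nu_1$ are uniform (they are, being independent of the initial data). A secondary subtlety is confirming that the limit $v$ obtained in the $H^1$-topology coincides with the $L^2$-generalized solution $u$ from Theorem \ref{evol0}; this follows because $L^2$-convergence is implied by $H^1$-convergence and the $L^2$-generalized solution is unique, so both constructions must yield the same function.
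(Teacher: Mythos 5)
There is a genuine gap at the very first step: you propose to choose $\varphi^l\in C_0^1([0,1];\mathbb R^n)$ with $\varphi^l\to\varphi$ in $H^1((0,1);\mathbb R^n)$, and you dismiss this as ``not an obstruction''. In general it is impossible. Since $H^1(0,1)$ embeds continuously into $C([0,1])$, convergence in $H^1$ forces convergence of the boundary values, so any $H^1$-limit of functions vanishing at $x=0,1$ lies in $H^1_0$; the $H^1$-closure of $C_0^1$ is $H^1_0$, a proper closed subspace. A function $\varphi\in H^1$ satisfying the compatibility conditions (\ref{zero}) need not vanish at the endpoints (the conditions (\ref{zero}) only relate the boundary values through the reflection coefficients), so for generic compatible data no such sequence exists, and no mollification/cutoff near the endpoints can repair this while keeping $H^1$-convergence. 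This is exactly the point the paper's proof is built around: it sets $\widetilde\varphi(x)=\varphi(0)+x(\varphi(1)-\varphi(0))$, notes $\varphi-\widetilde\varphi\in H^1_0$, approximates $\varphi-\widetilde\varphi$ by $\varphi^l_0\in C_0^\infty$ in $H^1$, and takes $\varphi^l=\varphi^l_0+\widetilde\varphi$. These $\varphi^l$ are $C^1$ (not $C_0^1$), have the same endpoint values as $\varphi$, hence inherit (\ref{zero}), and converge to $\varphi$ in $H^1$; Lemmas \ref{km} and \ref{H1} only require $C^1$ data satisfying (\ref{zero}), so they apply.

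The remainder of your plan (apply the estimate (\ref{eq:apr4}) to differences, which solve the homogeneous problem, conclude the Cauchy property in $C([s,t];H^1)\cap C^1([s,t];L^2)$, and pass to the limit) coincides with the paper's argument and is fine. One consequential bookkeeping change after the fix: since the corrected approximants are not in $C_0^1$, the identification of the limit with the $L^2$-generalized solution of Definition \ref{L2} is no longer automatic from the definition; you need the continuous dependence of the $L^2$-generalized solution on the initial data in $L^2$ (linearity plus Theorem \ref{evol0}) to conclude that solutions with compatible $C^1$ data $\varphi^l\to\varphi$ in $L^2$ converge to the same generalized solution, after which the $H^1$- and $C^1$-in-time regularity of the limit gives the claim.
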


\begin{proof}
	Define $\widetilde\varphi(x) = \varphi(0) + x(\varphi(1) - \varphi(0))$ for $x \in [0,1].$
	Note that $\varphi - \widetilde\varphi \in H^1_0((0,1);\mathbb R^n)$, see also \cite[p. 259]{Ev}. Then
	there exists a sequence
	$\varphi^l_0\!\in\! C_0^\infty([0,1];\mathbb R^n)$ approaching $\varphi - \widetilde\varphi$ in $H^1((0,1);\mathbb R^n)$.
Hence, the sequence $\varphi^l = \varphi^l_0 + \widetilde\varphi$  approaches $\varphi$ in $H^1((0,1);\mathbb R^n)$.
	
	By  Lemma \ref{H1}, for given $l\ge 1$,
	the piecewise continuously differentiable
	solution $u^l$ to the problem  (\ref{eq:1}), (\ref{eq:2}), (\ref{eq:in}) with $\varphi^l$
	in place of $\varphi$ satisfies the estimate  (\ref{eq:apr4}) with $u=u^l$ and $\varphi=\varphi^l$.
	This entails the convergence
	$$\max\limits_{s\le\theta\le t}\|u^m(\cdot,\theta)-u^l(\cdot,\theta)\|_{H^1\left((0,1);\mathbb R^n\right)}+
	\max\limits_{s\le\theta\le t}\|\partial_tu^m(\cdot,\theta)-\partial_tu^l(\cdot,\theta)\|_{L^2\left((0,1);\mathbb R^n\right)}\to 0 $$
	as $m,l\to\infty$ for each $t>s$. 
	Consequently, the sequence $u^l$ converges in the space
	$C([s,t];H^1\left((0,1);\mathbb R^n\right))\cap C^1([s,t];L^2\left((0,1);\mathbb R^n\right))$ to a function
	$$u\in C([s,t];H^1\left((0,1);\mathbb R^n\right))\cap 
	C^1([s,t];L^2\left((0,1);\mathbb R^n\right)),$$ as desired.
\end{proof}

{\it Proof of Theorem \ref{evol}}:
Now we switch to the homogeneous problem (\ref{eq:1u}), (\ref{eq:2}), (\ref{eq:in}). 
By Lemma \ref{C1}, 	the 
$L^2$-generalized solution  $u$ to   (\ref{eq:1u}), (\ref{eq:2}), (\ref{eq:in}) belongs to 
$C([s,t],H^1\left((0,1);\mathbb R^n\right))$ and to $ C^1([s,t],L^2\left((0,1);\mathbb R^n\right))$.
Using the definition of $U(t,s)$ (see Theorem \ref{evol0}), this implies the validity of (\ref{cap}). In particular, the mapping 
 $t\in \mathbb R \mapsto U(t,s)\varphi\in L^2\left((0,1);\mathbb R^n\right)$ is continuously differentiable  for all $t\ge s$. 
Furthermore, the continuous differentiability extends to the mapping $s\in \mathbb R \mapsto U(t,s)\varphi\in L^2\left((0,1);\mathbb R^n\right)$  for all $t\ge s$.
This follows from a standard argument based on the evolution property of $U(t,s)$ (the property $(a)$  in Theorem~\ref{evol0}).

\section{Proof of Theorem \ref{lin-smooth}}
\subsection{Abstract problem setting in $L^2$}\label{abstract}
\renewcommand{\theequation}{{\thesection}.\arabic{equation}}
\setcounter{equation}{0}

We now  reformulate the problem  (\ref{eq:1}), (\ref{eq:2}),  (\ref{eq:in}) as an abstract evolution equation in $L^2\left((0,1);\mathbb R^n\right)$. To this end, we introduce a one-parameter family of 
operators $\mathcal A(t)$ (where $t\in \mathbb R$ serves as the parameter) mapping  
$L^2\left((0,1);\mathbb R^n\right)$ to itself, by
\begin{equation}\label{A}
[\mathcal A(t)u](x)=\left(-a(x,t)\frac{\partial}{\partial x} - b(x,t)\right)u,
\end{equation}
with the domain 
\begin{equation}\label{D(A)}
\displaystyle
 D(\mathcal A(t))=\left\{v\in H^1((0,1);\mathbb R^n)\,:\,v_j(x_j)=
R_jv,\, j \le n\right\}\subset L^2((0,1);\mathbb R^n).
\end{equation}
Note that $D(\mathcal A(t))=D(\mathcal A)$ is independent of $t$.

Writing  $u(t)$ and  $f(t)$, we will mean  bounded and continuous maps from $\mathbb R$ to $L^2((0,1);\mathbb R^n)$
defined by $[u(t)](x)=u(x,t)$ and $[f(t)](x)=f(x,t)$, respectively.
In this notation, the  abstract formulation of the problem (\ref{eq:1}), (\ref{eq:2}), (\ref{eq:in}) takes the form
\begin{eqnarray}
&&\frac{d}{dt}u=\mathcal A(t)u +f(t), \label{unperturb}\\\
&& u(s)=\varphi\in L^2((0,1);\mathbb R^n).\label{unperturbb}
 \end{eqnarray}

\begin{defn} \rm
{\bf 1.}	 
A function $u\in C\left([s,\infty),L^2((0,1);\mathbb R^n)\right)$ is called a {\it classical solution to
		the abstract Cauchy problem (\ref{unperturb})--(\ref{unperturbb})} if
	$u$ is continuously differentiable in $L^2((0,1);\mathbb R^n)$ for $t>s$, $u(t)\in D(\mathcal A)$ for $t>s$, and
	(\ref{unperturb})--(\ref{unperturbb})  is satisfied in $L^2((0,1);\mathbb R^n)$.\\
	{\bf 2.}
A function $u\in BC\left(\mathbb R,L^2((0,1);\mathbb R^n)\right)$ is called a {\it bounded classical solution to
	the abstract equation (\ref{unperturb})} if
$u$ is continuously differentiable in $t$ with values in $L^2((0,1);\mathbb R^n)$,  $u(t)\in D(\mathcal A)$ for all $t\in\mathbb R$,  and
(\ref{unperturb})  is satisfied in $L^2((0,1);\mathbb R^n)$.	
\end{defn}

We will use the equivalence between the original and the abstract problem settings, as provided by the following theorem.
\begin{thm}\cite[Theorem 4.1]{KRT}\label{distr-clas} 
Suppose that $a_j,b_{jk},f_j\in BC^1(\Pi)$ and the condition (\ref{eq:h1})  is fulfilled.
If  $\varphi$ belongs to $ D(\mathcal A)$ and $u(x,t)$ is the 
$L^2$-generalized solution to the problem (\ref{eq:1}), (\ref{eq:2}), (\ref{eq:in}), then 
  the function
$u(t)$ such that $[u(t)](x):=u(x,t)$,
is a classical solution  to the abstract Cauchy problem (\ref{unperturb})--(\ref{unperturbb}).
Vice versa, if $u(t)$ is a classical solution  to the abstract Cauchy problem (\ref{unperturb})--(\ref{unperturbb}),
then $u(x,t):=[u(t)](x)$ is the $L^2$-generalized solution to the problem
(\ref{eq:1}), (\ref{eq:2}), (\ref{eq:in}).
\end{thm}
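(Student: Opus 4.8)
\emph{Plan.} The statement is an equivalence, so I would prove the two implications separately. The forward one — an $L^2$-generalized solution yields an abstract classical solution — follows from the regularity already obtained in Lemma~\ref{C1}, by passing to the limit in the boundary conditions and in the differential equation along an approximating sequence. The converse amounts to the variation-of-constants representation: both the $L^2$-generalized solution and any abstract classical solution coincide with $U(t,s)\varphi+\int_s^t U(t,r)f(r)\,dr$, so they agree; here I would use the differentiability of the evolution family from Theorem~\ref{evol}.

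\emph{From $L^2$-generalized to abstract classical.} Let $\varphi\in D(\mathcal A)$ and let $u$ be the $L^2$-generalized solution to (\ref{eq:1}), (\ref{eq:2}), (\ref{eq:in}). Since $\varphi\in H^1((0,1);\mathbb R^n)$ fulfills the zero order compatibility conditions (\ref{zero}), Lemma~\ref{C1} gives $u\in C([s,t];H^1((0,1);\mathbb R^n))\cap C^1([s,t];L^2((0,1);\mathbb R^n))$ for every $t>s$, and, as in the proof of that lemma, $u$ is the limit in this topology of a sequence $u^l$ of piecewise continuously differentiable solutions of (\ref{eq:1}), (\ref{eq:2}), (\ref{eq:in}) with data $\varphi^l\to\varphi$ in $H^1((0,1);\mathbb R^n)$. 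Each $u^l$ satisfies (\ref{eq:2}) pointwise; since the trace maps $v\mapsto v(0)$ and $v\mapsto v(1)$ from $H^1((0,1);\mathbb R^n)$ to $\mathbb R^n$ are continuous and the operators $R_j$ involve only these traces, letting $l\to\infty$ yields $u_j(x_j,t)=R_j u(\cdot,t)$ for all $j$ and $t\ge s$, i.e. $u(t)\in D(\mathcal A)$. Likewise, for each $l$ the identity $\partial_t u^l=-a\partial_x u^l-b u^l+f=\mathcal A(t)u^l+f(t)$ holds in $L^2((0,1);\mathbb R^n)$ for every $t$ (here $f(t)=f(\cdot,t)\in C(\mathbb R;L^2((0,1);\mathbb R^n))$ since $f_j\in BC^1(\Pi)$); as $\partial_x u^l\to\partial_x u$ and $u^l\to u$ in $C([s,t];L^2((0,1);\mathbb R^n))$ and $a,b$ are bounded, the right-hand side converges to $\mathcal A(t)u+f(t)$ and the left-hand side to $\partial_t u$. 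Thus $\frac{d}{dt}u=\mathcal A(t)u+f(t)$ in $L^2((0,1);\mathbb R^n)$ for $t>s$, and together with $u\in C([s,\infty);L^2((0,1);\mathbb R^n))$, $u(s)=\varphi$, $u(t)\in D(\mathcal A)$, and the continuous differentiability of $u$ in $t$, this says precisely that $u(t)$ is a classical solution of (\ref{unperturb})--(\ref{unperturbb}).

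\emph{From abstract classical to $L^2$-generalized.} Let $u(t)$ be a classical solution of (\ref{unperturb})--(\ref{unperturbb}) with $u(s)=\varphi\in L^2((0,1);\mathbb R^n)$. For $z\in D(\mathcal A)$ the map $U(\cdot,\cdot)z$ is jointly $C^1$ by Theorem~\ref{evol}, and by the first part of the present theorem (applied with $f\equiv 0$) it satisfies $\partial_t U(t,r)z=\mathcal A(t)U(t,r)z$; differentiating the evolution identity $U(t,r)U(r,\sigma)\psi=U(t,\sigma)\psi$ in $r$ and setting $\sigma=r$ then gives $\partial_r U(t,r)z=-U(t,r)\mathcal A(r)z$. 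Hence on $(s,t)$ we have $\frac{d}{dr}[U(t,r)u(r)]=-U(t,r)\mathcal A(r)u(r)+U(t,r)(\mathcal A(r)u(r)+f(r))=U(t,r)f(r)$; integrating and letting the lower endpoint tend to $s$ (using the continuity of $u$ and the strong continuity of $U$) yields the variation-of-constants formula $u(t)=U(t,s)\varphi+\int_s^t U(t,r)f(r)\,dr$. On the other hand, the $L^2$-generalized solution $v$ to (\ref{eq:1}), (\ref{eq:2}), (\ref{eq:in}) with data $\varphi$ is the $C([s,t];L^2((0,1);\mathbb R^n))$-limit of classical solutions $v^l$ whose data $\varphi^l\in C_0^1([0,1];\mathbb R^n)\subset D(\mathcal A)$ converge to $\varphi$ in $L^2((0,1);\mathbb R^n)$; by the first part each $v^l$ obeys the same variation-of-constants formula, and passing to the limit (strong continuity of $U(t,s)$ on $L^2((0,1);\mathbb R^n)$) shows that $v$ does too. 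Therefore $u(t)=v(t)$ for all $t\ge s$, i.e. $u(x,t)=[u(t)](x)$ is the $L^2$-generalized solution.

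\emph{Main obstacle.} The two delicate points are, in the forward implication, the passage to the limit that transports the boundary conditions through the traces and the equation through the $C([s,t];H^1)\cap C^1([s,t];L^2)$-convergence of Lemma~\ref{C1}, and, in the converse, the two-sided differentiability of $U(t,s)$ together with the correct action $z\mapsto -U(t,r)\mathcal A(r)z$ of the generators on $D(\mathcal A)$, which is exactly what makes the variation-of-constants computation go through. I would stress that the uniqueness implicit in the converse cannot be reached by a naive $L^2$-energy estimate (no dissipativity is assumed in this theorem), so routing it through the evolution family supplied by Theorem~\ref{evol0} is essential; everything else is routine once Lemma~\ref{C1} and Theorem~\ref{evol} are in hand.
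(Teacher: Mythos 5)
This theorem is not proved in the paper at all: it is imported from \cite[Theorem 4.1]{KRT} and used as a black box, so there is no in-paper proof to compare yours against. Your reconstruction is sound and fits exactly the machinery this paper does develop: the forward implication rests on Lemma~\ref{C1} (note $D(\mathcal A)$ is precisely the set of $H^1$-functions satisfying (\ref{zero}), and the approximating data $\varphi^l=\varphi^l_0+\widetilde\varphi$ from that lemma inherit (\ref{zero}) because $\widetilde\varphi$ matches $\varphi$ at $x=0,1$), while the converse runs through variation of constants, the same mechanism the paper itself invokes when it uses $\partial_s U(t,s)u=-U(t,s)\mathcal A(s)u$ for $u\in D(\mathcal A)$ in the proof of Theorem~\ref{u_*}. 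Two points deserve tightening. First, in the forward step you should state explicitly that the limit constructed in Lemma~\ref{C1} coincides with the $L^2$-generalized solution of Definition~\ref{L2}: the lemma's approximating data converge in $H^1$ but do not lie in $C_0^1([0,1];\mathbb R^n)$, so the identification requires the continuous dependence estimate of Theorem~\ref{evol0}(c); this is easy but it is the hinge that lets you pass the pointwise boundary conditions and the equation to $u$. Second, your derivation of $\partial_r U(t,r)z=-U(t,r)\mathcal A(r)z$ by differentiating $U(t,r)U(r,\sigma)\psi$ in $r$ is circular as phrased, since the product rule there presupposes differentiability of $r\mapsto U(t,r)w$ along a moving vector, which is what is being proved; the standard repair is the difference-quotient identity $h^{-1}\bigl(U(t,r+h)z-U(t,r)z\bigr)=-U(t,r+h)\,h^{-1}\bigl(U(r+h,r)z-z\bigr)$ for $h>0$ (and its analogue for $h<0$), combined with the forward differentiability at the initial time coming from the homogeneous case of your first part (Lemma~\ref{C1} gives $C^1$ up to the endpoint) and the uniform boundedness and strong continuity of $U$. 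With these two repairs your argument is a complete, self-contained proof of the statement that the paper delegates to \cite{KRT}, and the rest of your write-up (trace continuity for the boundary conditions, passage to the limit in the equation, uniqueness via the evolution family rather than an energy estimate) is correct as it stands.
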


\begin{thm} \label{u_*}
	Assume that $a_j, b_{jk}, f_j \in BC^1(\Pi)$ for all $j,k \le n$ and
the evolution operator $U(t,s)$ satisfies the exponential estimate (\ref{est1}).
Then the equation (\ref{unperturb}) has a unique bounded  classical solution 
\begin{eqnarray} \label{boundsol}
v^*(t) = \int_{-\infty}^t U(t,s)f(s) d s.
\end{eqnarray}
Moreover, the following a priori estimate is fulfilled:
\begin{eqnarray} \label{boundsol1}
\|v^*\|_{BC\left(\mathbb R;L^2((0,1);\mathbb R^n)\right)} \le \frac{M}{\alpha}\|f\|_{BC\left(\mathbb R;L^2((0,1);\mathbb R^n)\right)},
\end{eqnarray}
 where $M$ and $\alpha$ are  as in (\ref{est1}).
\end{thm}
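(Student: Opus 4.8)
The plan is to prove existence, uniqueness, and the a priori estimate for the bounded classical solution of the abstract equation $\frac{d}{dt}u = \mathcal{A}(t)u + f(t)$, exploiting the exponential stability estimate $\reff{est1}$. First I would verify that the candidate $v^*(t) = \int_{-\infty}^t U(t,s)f(s)\dd s$ is well defined: since $f \in BC(\mathbb{R};L^2)$ and $\|U(t,s)\|_{\mathcal{L}(L^2)} \le Me^{-\alpha(t-s)}$ with $\alpha>0$, the integrand is bounded in norm by $Me^{-\alpha(t-s)}\|f\|_{BC}$, so the improper integral converges absolutely and uniformly in $t$, yielding immediately the bound $\|v^*(t)\|_{L^2} \le \frac{M}{\alpha}\|f\|_{BC(\mathbb{R};L^2)}$, which is $\reff{boundsol1}$. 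Continuity of $t \mapsto v^*(t)$ in $L^2$ follows from strong continuity of the evolution family (property $(b)$ of Theorem~\ref{evol0}) together with dominated convergence, so $v^* \in BC(\mathbb{R};L^2)$.

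Next I would show that $v^*$ is a \emph{bounded classical} solution, i.e.\ that $v^*(t) \in D(\mathcal{A})$ for every $t$, that $v^*$ is continuously differentiable into $L^2$, and that $\reff{unperturb}$ holds pointwise. The natural route is to fix $r < t$, split $v^*(t) = U(t,r)v^*(r) + \int_r^t U(t,s)f(s)\dd s$ using the evolution property $(a)$, and recognize the second term as the mild solution of the Cauchy problem on $[r,\infty)$ with initial datum $v^*(r)$ at time $r$. Invoking Theorem~\ref{distr-clas} (the equivalence between the $L^2$-generalized solution of $\reff{eq:1}$,$\reff{eq:2}$,$\reff{eq:in}$ and the classical solution of the abstract Cauchy problem) — and approximating $v^*(r)$ by elements of $D(\mathcal{A})$ if needed, using the a priori control already obtained — one gets that $w(t) := \int_r^t U(t,s)f(s)\dd s$ is a classical solution of $\reff{unperturb}$ on $(r,\infty)$ with $w(r)=0$. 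Since $r$ is arbitrary and $U(t,r)v^*(r)$ solves the homogeneous equation classically for $t>r$ (again by Theorem~\ref{distr-clas} applied to the homogeneous problem, or directly by Theorem~\ref{evol}), $v^*$ is differentiable into $L^2$ at every $t$, lies in $D(\mathcal{A})$ there, and satisfies $\frac{d}{dt}v^* = \mathcal{A}(t)v^* + f(t)$ on all of $\mathbb{R}$; boundedness of $v^*$ was already shown, so it is a bounded classical solution.

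For uniqueness, suppose $v_1, v_2$ are two bounded classical solutions and set $w = v_1 - v_2$. Then $w$ is a bounded classical solution of the homogeneous equation $\frac{d}{dt}w = \mathcal{A}(t)w$, so for every $r \le t$ one has $w(t) = U(t,r)w(r)$. Applying $\reff{est1}$ gives $\|w(t)\|_{L^2} \le Me^{-\alpha(t-r)}\|w(r)\|_{L^2} \le Me^{-\alpha(t-r)}\|w\|_{BC(\mathbb{R};L^2)}$, and letting $r \to -\infty$ forces $w(t) = 0$ for every $t$; hence $v_1 = v_2$.

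The main obstacle I anticipate is the middle step: rigorously upgrading the mild-solution formula $\reff{boundsol}$ to a genuine classical solution of the abstract equation, because the initial datum $v^*(r)$ at time $r$ need not lie in $D(\mathcal{A})$ a priori, whereas Theorem~\ref{distr-clas} produces a classical solution only for data in $D(\mathcal{A})$. Resolving this requires either establishing an internal regularizing property of the integral term $\int_r^t U(t,s)f(s)\dd s$ (so that it lands in $D(\mathcal{A})$ for $t>r$ regardless of $f$), or a two-stage argument shifting the base point $r$ and exploiting that $v^*$ restricted to $[r_0,\infty)$ for $r_0$ slightly larger than $r$ has improved regularity. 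Everything else — convergence of the integral, the bound $\reff{boundsol1}$, continuity, and uniqueness — is routine once the exponential decay $\reff{est1}$ and the evolution and equivalence theorems are in hand.
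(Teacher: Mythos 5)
Your treatment of the easy parts is fine: the absolute convergence of the integral, the bound (\ref{boundsol1}), continuity of $t\mapsto v^*(t)$, and the uniqueness argument via $w(t)=U(t,r)w(r)$ with $r\to-\infty$ are all correct and standard. But the step you yourself flag as the ``main obstacle'' is precisely where the whole proof lives, and the ways you propose to get around it do not work. To invoke Theorem \ref{distr-clas} on $[r,\infty)$ you would need $v^*(r)\in D(\mathcal A)$, which is not known a priori and is essentially equivalent to what must be proved; approximating $v^*(r)$ by elements of $D(\mathcal A)$ only gives convergence of the approximating solutions in $C([r,T];L^2)$, hence again an $L^2$-generalized (mild) solution in the limit, not a classical one -- there is no a priori control in the $D(\mathcal A)$- or $H^1$-norm available here, since Theorem \ref{u_*} assumes only (\ref{est1}). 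Your alternative fix, an ``internal regularizing property'' of $\int_r^t U(t,s)f(s)\,ds$, fails as well: for reflection boundary conditions of the form (\ref{eq:R}) the evolution family has no smoothing property (this is exactly the ``loss of smoothness'' phenomenon the paper emphasizes; $U$ is not eventually differentiable), and for a right-hand side that is merely in $BC(\mathbb R;L^2)$ the mild solution of a hyperbolic problem is in general not classical. Note that your argument never uses the hypothesis $a_j,b_{jk},f_j\in BC^1(\Pi)$ -- a sign it cannot be complete, because without time-regularity of $f$ the claimed classical solvability is false in general.

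The paper closes this gap by a different device. It first proves (Claim 1) that for sufficiently large $\lambda$ the operators $\mathcal A(t)+\lambda I$ are invertible uniformly in $t$, with $(\mathcal A(t)+\lambda I)^{-1}$ bounded from $L^2$ into $H^1$ (estimates (\ref{M0}), (\ref{M1})), via an explicit analysis of the ODE boundary value problem (\ref{linop1})--(\ref{linop2}). It then writes $f(s)=(\mathcal A(s)+\lambda I)(\mathcal A(s)+\lambda I)^{-1}f(s)$ inside (\ref{boundsol}) and integrates by parts in $s$, using $\partial_s U(t,s)u=-U(t,s)\mathcal A(s)u$ for $u\in D(\mathcal A)$ together with the $BC^1$-regularity of $f$ and of the coefficients, arriving at the representation (\ref{u*}): $v^*(t)=-(\mathcal A(t)+\lambda I)^{-1}f(t)+\int_{-\infty}^t U(t,s)\widetilde f(s)\,ds$ with $\widetilde f$ bounded, continuous, and $D(\mathcal A)$-valued. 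Only for such an integrand can the integral be differentiated in $t$ (as in Krein's book), which is what yields that $v^*$ is a genuine classical solution. To repair your proof you need this resolvent/integration-by-parts regularization (or some equivalent exploitation of the time-differentiability of $f$), not a smoothing or base-point-shifting argument.
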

\begin{proof}
	As the boundedness estimate (\ref{boundsol1}) immediately follows from the  representation formula (\ref{boundsol}) and 
	the exponential stability estimate (\ref{est1}),
	our objective is  simplified to proving (\ref{boundsol}).
	We divide the proof of (\ref{boundsol}) into two claims.
	
	\begin{claim}
		 There exists  a (sufficiently large)  $\lambda>0$  such that, for any $t\in\mathbb R$,
	the operator ${\mathcal A} (t)+ \lambda I$	is invertible.
		The inverse, denoted as $({\mathcal A} (t)+ \lambda I)^{-1}$, maps  $L^2((0,1);\mathbb R^n)$ to $H^1((0,1);\mathbb R^n)$ and is both continuous and uniformly bounded in $t\in \mathbb R$. Furthermore, 
	  \begin{equation}\label{M0}
	 \left\| ({\mathcal A} (t)+ \lambda I)^{-1}g\right\|_{L^2((0,1);\mathbb R^n)}\le 
	 \frac{M_0}{\sqrt{\lambda}}\|g\|_{L^2((0,1);\mathbb R^n)}
	\end{equation}
	  and
	  \begin{equation}\label{M1}
	\left\| ({\mathcal A} (t)+ \lambda I)^{-1}g\right\|_{H^1((0,1);\mathbb R^n)}\le 
	M_0\sqrt{\lambda}\|g\|_{L^2((0,1);\mathbb R^n)}
	\end{equation} 
	   for all $g\in L^2((0,1);\mathbb R^n)$ and some $M_0>0$ independent of $\lambda$, $t$,  and $g$.
		\end{claim}
\begin{subproof}
Set $u(x,t) = (u^1(x,t), u^2(x,t)),$ where
$$u^1(x,t) = (u_1(x,t),\dots,u_m(x,t)), \quad u^2(x,t) = (u_{m+1}(x,t),\dots,u_n(x,t)).$$
Moreover, set
$$
R_{11}\!=\!(r_{jk})_{j,k=1}^m, \ R_{12}\!=\!(r_{jk})_{j=1,k=m+1}^{m,n}, R_{21}\!=\!(r_{jk})_{j=m+1,k=1}^{n,m},\ R_{22}\!=\!(r_{jk})_{j,k=m+1}^n,
$$
where $r_{jk}$ are the reflection boundary coefficients introduced in (\ref{eq:R}).

The claim will be proved if we show  that there exists  $\lambda>0$ such that, for each $t\in\mathbb R$ and $g\in L^2((0,1);\mathbb R^n)$, the linear ODE system 
\begin{eqnarray} \label{linop1}
\frac{d u}{d x} +a^{-1}(x,t) (b(x,t) + \lambda I)u = g(x)
\end{eqnarray}
with the boundary conditions 
\begin{eqnarray} \label{linop2}
u^1(0,t) = R_{11}u^1(1,t) + R_{12}u^2(0,t), \quad u^2(1,t) = R_{21}u^1(1,t) + R_{22}u^2(0,t)
\end{eqnarray}
 has a unique  solution $u(\cdot,t)\in H^1((0,1);\mathbb R^n)$, which is continuous and uniformly bounded in 
 the parameter $t$.
Rewrite  (\ref{linop1}) in the form of an operator equation in  $L^2((0,1);\mathbb R^n)$, specifically as
\begin{eqnarray*} 
{\mathcal A}_1(t)u +  {\mathcal A}_2(t)u  = g, 
\end{eqnarray*}
where 
$$
\begin{array} {rcl}
[\mathcal A_1(t)u](x)  &=&\displaystyle  \frac{d u}{d x}  +\lambda a^{-1}(x,t) u, \\ [3mm]
D(\mathcal A_1)&=&\Bigl\{v\in H^1((0,1);\mathbb R^n)\,:\,v^1(0) = R_{11}v^1(1) + R_{12}v^2(0), \\ [3mm]
&&\ \  v^2(1) = R_{21}v^1(1) + R_{22}v^2(0) \Bigr\}
\end{array}
$$
and 
$$
\begin{array} {rcl}
[\mathcal A_2(t)u](x)  &=& a^{-1}(x,t) b(x,t)u,\\ [3mm]
D(\mathcal A_2)&=&L^2((0,1);\mathbb R^n).
\end{array}
$$

To establish the desired solvability of  (\ref{linop1})--(\ref{linop2}), we begin by proving the invertibility of ${\mathcal A}_1$. For this purpose, let  $g=(g^1,g^2)$ be an arbitrary vector with  $g^1\in L^2((0,1);\mathbb R^m)$ and
$g^2\in L^2((0,1);\mathbb R^{n-m})$.  We then consider the two decoupled systems:
\begin{eqnarray*}
\frac{du^1}{dx} =\lambda A_1(x,t) u^1 + g^1(x)
\end{eqnarray*}
and
\begin{eqnarray*}
 \frac{du^2}{dx} =\lambda A_2(x,t) u^2 + g^2(x),	
\end{eqnarray*}
where $A_1 = -diag(a_1^{-1},\dots,a_m^{-1})$ and $A_2 = -diag(a_{m+1}^{-1},\dots,a_n^{-1}).$	
Let 
$$V_1(x,\xi;t,\lambda) = \exp \int_\xi^x \lambda A_1(\xi,t) d\xi\quad \mbox{ and }\quad
V_2(x,\xi;t,\lambda) = \exp \int_\xi^x \lambda A_2(\xi,t) d\xi$$
 be fundamental matrices of the corresponding homogeneous systems, respectively.  Notice that the entries of the matrices $V_j$
 are continuous in $t$. This yields the solution formulas
\begin{eqnarray}  \label{linop6}
\begin{array}{ll}
\displaystyle u^1(x;t,\lambda) = V_1(x,0;t,\lambda)u^1(0;t,\lambda) + 
 \int_{0}^x V_1(x,\xi;t,\lambda) g^1(\xi) d\xi, \\[3mm]
\displaystyle u^2(x;t,\lambda) = V_2(x,1;t,\lambda)u^2(1;t,\lambda) +  \int_{1}^x V_2(x,\xi;t,\lambda) g^2(\xi) d\xi.
\end{array}
\end{eqnarray}
 By our assumption, $a_j > 0$ for $j\le m$ and $a_j < 0$ for $j >m.$ Then we easily derive the bounds
\begin{eqnarray}  \label{linop7}
\|V_1(x,\xi;t,\lambda)\| \le  e^{-a\delta\lambda(x - \xi)} \quad \mbox{and} \quad
 \|V_2(x,\xi;t,\lambda)\| \le  e^{\delta \lambda(x - \xi)} \quad  \mbox{for} \  x \ge \xi,
 \end{eqnarray}
where the constant $\delta$ is positive and is  independent of $\lambda$, $x$, $\xi$, and $t$.
Accordingly to (\ref{linop6}), 
\begin{eqnarray} \label{linop8}
 u^1(1) = V_1 u^1(0) +  J_1, \quad
 u^2(0) = V_{2} u^2(1) + J_2,
\end{eqnarray}
where, for brevity, we omitted the explicit dependence on $t$ and $\lambda$ in the  following notation: $u^1(x)=u^1(x;t,\lambda)$, $u^2(x)=u^2(x;t,\lambda)$, 
$V_1 = V_1(1,0;t,\lambda),$ $V_2 = V_2(0,1;t,\lambda)$, and
$$
 J_1 = \int_0^1  V_{1}(1,\xi;t,\lambda)g^1(\xi) d\xi, \quad
 J_2 = -\int_0^1  V_{2}(0,\xi;t,\lambda)g^2(\xi) d\xi.$$
Substituting  (\ref{linop8}) into  (\ref{linop2}) implies the equalities
\begin{equation}\label{101}
\begin{array}{rcl}
(I -R_{11} V_1) u^1(0) - R_{12} V_2 u^2(1)& =& R_{11} J_1 + R_{12} J_2, \\[2mm]
 -R_{21} V_1 u^1(0) + (I - R_{22}V_{2}) u^2(1) &= &R_{21} J_1 + R_{22}J_2.
\end{array}
\end{equation}
Note that, due to (\ref{linop7}),  the following uniform in $t$ estimates are true:
\begin{equation}\label{100}
\|V_j\| \le e^{-\delta\lambda}\quad\mbox{and}\quad\|J_j\| \le  \frac{1}{ \sqrt{2\delta\lambda}}\|g\|_{L^2((0,1);\mathbb R^n)}\quad\mbox{for } j=1,2.
\end{equation}
 Consequently, we can choose  $\lambda>0$ such that the operator 
$I -R_{11} V_1$ is invertible, and the following estimate is fulfilled:
\begin{eqnarray*} 
	\| (I - R_{11} V_1)^{-1}\| \le (1 - \|R_{11}\| e^{-\delta\lambda})^{-1}.
\end{eqnarray*}
Hence,
$$ 
  u^1(0) = (I - R_{11} V_1)^{-1}R_{12} V_2 u^2(1) + (I - R_{11} V_1)^{-1} (R_{11} J_1 + R_{12} J_2)
  $$
  and, therefore,
  $$
  \begin{array}{ll}
  \left[(I - R_{22} V_2 - R_{21}V_1  (I - R_{11} V_1)^{-1}R_{12} V_2 \right] u^2(1)  \\[1mm]
  \hspace{10mm} = R_{21} J_1 + R_{22}J_2 + R_{21}V_1  (I - R_{11} V_1)^{-1}(R_{11} J_1 + R_{12} J_2).
\end{array}
$$
On account of (\ref{100}), we can choose a (potentially new) sufficiently large $\lambda$ such that the matrix
$(I - R_{22} V_2 - R_{21}V_1  (I - R_{11} V_1)^{-1}R_{12} V_2$ is invertible and, moreover, there exists 
a constant $M_1 > 0$  fulfilling the estimate
 $$\|\left[(I - R_{22} V_2 - R_{21}V_1  (I - R_{11} V_1)^{-1}R_{12} V_2 \right]^{-1}\| \le M_1$$
uniformly in $t\in\mathbb R$. 
Therefore, for every function $g$, the system (\ref{101}) has  a unique  solution $(u^1(0;t,\lambda),u^2(1;t,\lambda))$, which is continuous in $t$. Furthermore, 
$$\|u_1(0)\| + \|u_2(1)\| \le \frac{M_2}{\sqrt{\lambda}}\|g\|_{L^2((0,1);\mathbb R^n)},
	$$
where the constant $M_2$ is independent of $t$ and $\lambda$.

We have, therefore, proved that the operator ${\mathcal A}_1(t)$ is invertible. As it follows from (\ref{linop6}), $(\mathcal A_1(t))^{-1} g$ belongs to 
$H^1((0,1);\mathbb R^n)$ and is a continuous function in the parameter $t$. Moreover,   there exists $M_3>0$ such that $\|(\mathcal A_1(t))^{-1} g\|_{L^2((0,1);\mathbb R^n)} \le  \frac{M_3}{\sqrt{\lambda}}\|g\|_{L^2((0,1);\mathbb R^n)}$. 
Furthermore, since the operator $\mathcal A_2(t)$ is bounded and does not depend on $\lambda$, we have  $\| (\mathcal A_1(t))^{-1}{\mathcal A}_2(t)\|_{\mathcal L(L^2((0,1);\mathbb R^n))}\le \frac{M_4}{\sqrt{\lambda}}< 1$ for some $M_4>0$ and
for sufficiently large $\lambda.$ Summarizing, there exists  a constant $M_0>0$ independent of $\lambda$ and $t$ such that
\begin{align*}
&\left\|[{\mathcal A}_1(t) + {\mathcal A}_2(t)]^{-1}\right\|_{\mathcal L(L^2((0,1);\mathbb R^n))} \\ 
&\qquad\le \left\|[I + (\mathcal A_1(t))^{-1}{\mathcal A}_2(t)]^{-1}(\mathcal A_1(t))^{-1}\right\|_{\mathcal L(L^2((0,1);\mathbb R^n))} \\ 
& \qquad\le 
\left\|(\mathcal A_1(t))^{-1}\right\|_{\mathcal L(L^2((0,1);\mathbb R^n))} \sum_{j=0}^\infty \left\|(\mathcal A_1(t))^{-1}{\mathcal A}_2(t)\right\|_{\mathcal L(L^2((0,1);\mathbb R^n))}^j\le
\frac{M_0}{\sqrt{\lambda}}.
\end{align*}
To finish the proof of this claim, it remains to note that  the estimate (\ref{M1}) now easily follows from (\ref{linop1}).
\end{subproof}
\begin{claim}
    The function $v^*$ given by (\ref{boundsol}) is a bounded classical solution to the abstract 
		equation~(\ref{unperturb}).
	\end{claim}	
\begin{subproof}	
		Note that the integral in (\ref{boundsol}) exists due to the continuity of $U(t,s)f(s)$ in $s$
		and the exponential bound (\ref{est1}).
		
On the account of  Claim 1, we can
 rewrite (\ref{boundsol}) in the form
\begin{equation}\label{boundsol2}
	\begin{split}
 v^*(t) &= \int_{-\infty}^t U(t,s)({\mathcal{A}}(s) + \lambda I)({\mathcal{A}}(s) + \lambda I)^{-1}f(s)ds 
 \\
&  = \int_{-\infty}^t U(t,s){\mathcal{A}}(s)({\mathcal{A}}(s) +\lambda I)^{-1}f(s) ds +
\lambda \int_{-\infty}^t U(t,s)({\mathcal{A}}(s) +\lambda I)^{-1}f(s)ds \\
&  = - \int_{-\infty}^t \frac{\partial U(t,s)}{\partial s}({\mathcal{A}}(s) + \lambda I)^{-1}f(s) ds +
\lambda \int_{-\infty}^t U(t,s)({\mathcal{A}}(s) + \lambda I)^{-1}f(s) ds \\
& = -U(t,s)({\mathcal{A}}(s) + \lambda I)^{-1}f(s)\biggl|_{s = -\infty}^t 
+ \int_{-\infty}^t U(t,s)\frac{\partial }{\partial s}\left[({\mathcal{A}}(s) + \lambda I)^{-1}f(s)\right] ds   \\
& \quad + \lambda \int_{-\infty}^t U(t,s)({\mathcal{A}}(s) + \lambda I)^{-1}f(s) ds. 
\end{split}
\end{equation}
Here we used the property that,  if $u\in D(\mathcal A)$, then
$$
\frac{\partial}{\partial s}U(t,s)u = -U(t,s)\mathcal A(s)u.
$$
Since
\begin{equation}\label{ds}
\frac{\partial}{\partial s} \left[(\mathcal A(s) +\lambda I)^{-1}f(s)\right] =-
({\mathcal{A}}(s) +\lambda I)^{-1} \left(\frac{d{\mathcal{A}}(s)}{ds}({\mathcal{A}}(s) + \lambda I)^{-1}f(s) -
f^\prime(s)\right)
\end{equation}
and the function $\left(\frac{d\mathcal A(s)}{ds}({\mathcal{A}}(s) + \lambda I)^{-1}f(s) -
f^\prime(s)\right)$ is  continuous and bounded in~$s$,  the function $\frac{\partial}{\partial s} \left[(\mathcal A(s) +\lambda I)^{-1}f(s)\right]$
 is also continuous and bounded in $s$, with values in  $D(\mathcal A)$. 
 Further, taking into account the exponential stability estimate (\ref{est1}), 
 the function
$$ \int_{-\infty}^t U(t,s)\frac{\partial}{\partial s}\left[({\mathcal{A}}(s) + \lambda I)^{-1}f(s)\right] d s $$
is continuously differentiable in $t$, like in the proof of  \cite[Theorem 3.2, p. 197]{Krein}.
Moreover, the last integral in (\ref{boundsol2}) is
 continuously differentiable in~$t$, by the same argument.
 
  It remains to show that  $v^*$ satisfies (\ref{unperturb})
 in the classical sense. To this end,
 set
\begin{equation}\label{ftilde}
\widetilde f(s) =-
({\mathcal{A}}(s) +\lambda I)^{-1} \left(\frac{d{\mathcal{A}}(s)}{ds}({\mathcal{A}}(s) + \lambda I)^{-1}f(s) -
f^\prime(s)-\lambda f(s)\right).
\end{equation}
By (\ref{boundsol2}),  the function  $v^*(t)$ reads
\begin{equation}\label{u*}
v^*(t) =  -({\mathcal{A}}(t) + \lambda I)^{-1}f(t) +\int_{-\infty}^t U(t,s)\widetilde f(s) ds.
\end{equation} 
 As $\widetilde f(s) \in D(\mathcal A)$, then the right hand side of (\ref{boundsol}) is continuously differentiable 
 and, due to (\ref{ds}), it holds
 \begin{eqnarray*}
 & & 	\displaystyle\frac{d v^*(t)}{dt}=\displaystyle
 	({\mathcal{A}}(t) +\lambda I)^{-1} \left(\frac{d{\mathcal{A}}(t)}{dt}({\mathcal{A}}(t) + \lambda I)^{-1}f(t) -
 	f^\prime(t)\right)+\widetilde f(t) \\
& & \displaystyle +\int_{-\infty}^t \frac{\partial U(t,s)}{\partial t}\widetilde f(s)d s
 = \lambda ({\mathcal{A}}(s) +\lambda I)^{-1}f(t)+\int_{-\infty}^t \frac{\partial U(t,s)}{\partial t}\widetilde f(s) d s.
  \end{eqnarray*}
 On the other hand,
 $$
 \begin{array}{rcl}
 	 \mathcal A(t) v^*(t)+f(t)&=&\displaystyle-\mathcal A(t)({\mathcal{A}}(t) + \lambda I)^{-1}f(t) +\mathcal A(t)\int_{-\infty}^t U(t,s)\widetilde f(s) ds+f(t)\\ [3mm]
 	 &=&\displaystyle	\lambda ({\mathcal{A}}(s) +\lambda I)^{-1}f(t)+\int_{-\infty}^t \frac{\partial U(t,s)}{\partial t}\widetilde f(s) d s,
 \end{array} $$
 as desired. 
\end{subproof}
 The proof of  Theorem \ref{u_*} is complete. 
\end{proof}
 
 \subsection{Proof of Part ${\bf (i)}$: $BC$-solutions}\label{BC0}
 
 First show that  the solution $v^*$ given by (\ref{boundsol}) belongs to 
 $BC(\mathbb R,H^1((0,1);\mathbb R^n))$. Since all the coefficients 
of the initial system and their first order derivatives are globally bounded in~$\Pi$, then combining
(\ref{M0}) with (\ref{ftilde}) yields that
\begin{equation}\label{M5}
 \| \widetilde f \|_{BC(\mathbb R;H^1((0,1);\mathbb R^n))} \le 
 M_5 \|f\|_{BC^1\left(\mathbb R;H^1((0,1);\mathbb R^n)\right)} 
\end{equation}
for a constant $M_5\!>\!0$ independent of $f$.
Introduce an operator $\mathcal A_d: H^1((0,1);\mathbb R^n)\to L^2((0,1);\mathbb R^n)$ defined by
$$
\mathcal A_du=\left(\partial_xu_1,\partial_xu_2,\dots, \partial_x u_n\right).
$$
Applying this operator  to both sides of (\ref{u*}) gives the equality
\begin{equation}\label{h1}
\mathcal  A_dv^*(t) =  -\mathcal  A_d({\mathcal{A}}(t) + \lambda I)^{-1}f(t) +\int_{-\infty}^t \mathcal  A_dU(t,s)\widetilde f(s) d s.
\end{equation}
The first summand in the right-hand side can  easily be estimated  using (\ref{M1}), specifically
\begin{equation}\label{h2}
	\begin{split}
		&	\|\mathcal A_d({\mathcal{A}}(t)+ \lambda I)^{-1}f(t)\|_{L^2((0,1);\mathbb R^n)}  \\  
&\qquad	\le \|\mathcal A_d\|_{\mathcal L(H^1,L^2)} \left\| ({\mathcal A} (t)+ \lambda I)^{-1}\right\|_{\mathcal L(L^2,H^1)}\|f(t)\|_{L^2((0,1);\mathbb R^n)} \\  
&\qquad
\le
M_6\|f(t)\|_{L^2((0,1);\mathbb R^n)},
	\end{split}
\end{equation}
where $M_6>0$ is independent of $t$ and $f$.
For the second summand, we use both the estimate (\ref{M5}) and the  assumption about the uniform exponential stability of~$U$
in~$H^1$.  We, therefore,  derive  the bound
\begin{equation}\label{h3}
	\begin{split}
&\displaystyle\int_{-\infty}^t\left\| \mathcal A_dU(t,s)\widetilde f(s)\right\|_{L^2(\Pi;\mathbb R^n)} d s  \\ &
\qquad\le
\int_{-\infty}^t\left\| \mathcal A_d\right\|_{\mathcal L(H^1,L^2)}\bigl\| U(t,s)\widetilde f(s)\bigr\|_{H^1(\Pi;\mathbb R^n)} d s \\
&\qquad\displaystyle
\le M\int_{-\infty}^t e^{-\alpha(t-s)}\bigl\|\widetilde f(s)\bigr\|_{H^1(\Pi;\mathbb R^n)}d s\le
M_7\|f\|_{BC^1(\mathbb R;H^1((0,1);\mathbb R^n))}.
	\end{split}
\end{equation}
Combining the  estimates
(\ref{h1})--(\ref{h3}) results in the following bound:
\begin{eqnarray*} 
	\|v^*\|_{BC\left(\mathbb R;H^1((0,1);\mathbb R^n)\right)} \le 
	M_8\|f\|_{BC^1\left(\mathbb R;H^1([0,1];\mathbb R^n)\right)},
\end{eqnarray*}
being true for some $M_8>0$ independent of $f$. 
Due to the continuous embedding of $H^1(0,1)$ into $C([0,1])$,  we have $v^*\in BC\left(\mathbb R;C([0,1];\mathbb R^n)\right)$.
Moreover,
there exist   constants $M_9$ and $M_{10}$
such that
\begin{eqnarray*}
	\|v^*\|_{BC\left(\mathbb R;C([0,1];\mathbb R^n)\right)}\le M_9\|v^*\|_{BC\left(\mathbb R;H^1((0,1);\mathbb R^n)\right)} \le M_{10}\|f\|_{BC^1\left(\mathbb R;H^1([0,1];\mathbb R^n)\right)}.
\end{eqnarray*}
 Hence, the function $u^*(x,t)=[v^*(t)](x)$
belongs to $BC(\Pi; \mathbb R^n)$ and satisfies the a priori estimate
\begin{eqnarray} \label{h5}
	\|u^*\|_{BC(\Pi;\mathbb R^n)} \le  M_{10}\left(\| f\|_{BC^1(\Pi;\mathbb R^n)} + \| \partial^2_{xt}f\|_{BC(\Pi;\mathbb R^n)} \right).
\end{eqnarray}
Note that this estimate  is far from being optimal in the
$BC$-space, as it demands too high regularity from
the right-hand side $f$.  This
lack of optimality stems from our  method of obtaining  (\ref{h5}). Nevertheless,
  our final a priory estimate in the $BC^2$-space will show more optimal correspondence between the regularities 
  of the right hand sides and the solutions.

  Now, since $v^*$ is a bounded  classical solution to (\ref{unperturb}),
the function $u^*$ satisfies (\ref{eq:1}), (\ref{eq:2}) almost everywhere, where the partial derivatives in
(\ref{eq:1}) are understood
in the distributional sense and the boundary conditions (\ref{eq:2}) pointwise.
This means, equivalently, that 
$u^*$ satisfies the following system of integral equations pointwise:
\begin{align}\label{rep}
u^*_j(x,t)&= c_j(x_j,x,t)R_ju^*(\cdot,\omega_j(x_j))  \nonumber\\ 
&\quad - \displaystyle\int_{x_j}^x d_j(\xi,x,t)\biggl(
\sum_{k\not=j} b_{jk}(\xi,\omega_j(\xi))
u^*_k(\xi,\omega_j(\xi)) - f_j(\xi,\omega_j(\xi))\biggr)d\xi, \quad  j\le n,
\end{align}
(for details see, e.g. \cite[p. 4192--4193]{jee}). In this case, $u^*$ will be called a
{\it bounded continuous solution}
to (\ref{eq:1}), (\ref{eq:2}).

\subsection{Proof of Part ${\bf (i)}$: $BC^1$-solutions}\label{BC1}

The higher regularity of the $BC$-solution will be established using an 
argument analogous to that in \cite{jee}.

Introduce  operators
$C,D,F\in\mathcal L(BC(\Pi;\mathbb R^n))$ by
\begin{eqnarray*}
	[Cu]_j(x,t) & = & c_j(x_j,x,t)R_j u(\cdot,\omega_j(x_j,x,t)), \\[1mm]
	[Du]_j(x,t) & = &
	-\int_{x_j}^{x}d_j(\xi,x,t)\sum_{k\neq j} b_{jk}(\xi,\omega_j(\xi,x,t))u_k(\xi,\omega_j(\xi,x,t)) d\xi, \\[1mm]
	[Ff]_j(x,t) & = &  \int_{x_j}^{x} d_j(\xi,x,t) f_j(\xi,\omega_j(\xi,x,t)) d\xi
\end{eqnarray*}
In terms of those operators the system (\ref{rep})  reads as
\begin{eqnarray} \label{oper}
	u=Cu+Du+Ff.
\end{eqnarray}
After an iteration of this equation we  get
\begin{eqnarray} \label{oper2}
u=Cu+(DC + D^2)u+ (I + D)Ff.
\end{eqnarray}
 
 We now prove that
$I - C$ is a bijective operator from  $BC_t^1(\Pi; \mathbb R^n)$ to itself.
We are done if we  show that the system 
\begin{equation}\label{simpl}
u_j(x,t)=c_j(x_j,x,t)R_ju(\cdot,\omega_j(x_j,x,t))+h_j(x,t), \quad j\le n,
\end{equation}
is uniquely solvable in $BC^1_t (\Pi;\mathbb R^n)$ for any $h\in BC^1_t (\Pi;\mathbb R^n)$.
To achieve this, we consider the system obtained by evaluating (\ref{simpl})
at $x=0$ for $m<j\le n$ and at $x=1$ for $1\le j\le m$.
 This results in the following system of $n$ equations with respect to
$z(t) = (z_1(t),\dots,z_n(t))= (u_1(1,t),\dots, u_m(1,t), u_{m+1}(0,t),\dots,u_n(0,t))$:
\begin{equation}\label{simpl1}
\begin{array}{ll}
\displaystyle	z_j(t)= c_j(x_j,1 - x_j,t)\sum\limits_{k=1}^nr_{jk}z_k(\omega_j(x_j, 1 - x_j, t)),
	\quad  j\le n.
\end{array}
\end{equation}
On the account of (\ref{eq:G}), the system (\ref{simpl}) is uniquely solvable in $BC^1_t (\Pi;\mathbb R^n)$ if and only if
\begin{equation}\label{contr1}
I-G_0 \mbox{ is bijective from }   BC^1(\mathbb R;\mathbb R^n) \mbox{ to }   BC^1(\mathbb R;\mathbb R^n).
\end{equation}

To prove (\ref{contr1}),  let us  norm the space  $BC^1(\mathbb R;\mathbb R^n)$ with
\begin{equation}\label{beta}
\|v\|_{\sigma} = \|v\|_{BC(\mathbb R;\mathbb R^n)} + \sigma \|\partial_t v\|_{BC(\mathbb R;\mathbb R^n)},
\end{equation}
where a positive constant $\sigma$ will be specified later. Note that the norms given by (\ref{beta}) are equivalent for all $\sigma>0$.
 We, therefore, have  to prove that  there exist  constants $\sigma<1$ and $\gamma<1$ such that
$$
\|G_0 v\|_{BC(\mathbb R;\mathbb R^n)}+\sigma\left\|\frac{d}{dt} [G_0  v]\right\|_{BC(\mathbb R;\mathbb R^n)}
\le \gamma\left(\|v\|_{BC(\mathbb R;\mathbb R^n)} + \sigma\|v^\prime\|_{BC(\mathbb R;\mathbb R^n)}\right)
$$
for all $ v \in  BC^1(\mathbb R;\mathbb R^n)$. For given $v\in BC^1(\mathbb R;\mathbb R^n)$, we have
\begin{eqnarray*}
\frac{d}{dt} [G_0 v]_j(t) &=&  \partial_t c_j(x_j,1 - x_j,t)\sum\limits_{k=1}^nr_{jk}v_k(\omega_j(x_j, 1 - x_j, t)) \\
& &   +  c_j^1(x_j,1 - x_j,t) \sum\limits_{k=1}^nr_{jk}v_k^\prime(\omega_j(x_j, 1 - x_j, t)), \quad  j \le n.
\end{eqnarray*}
Here we used the equalities
\begin{eqnarray*}
 \partial_t \omega_j(\xi,x,t)= \exp\int_\xi^x \left[\frac{\partial_ta_j}{a_j^2}\right](\eta,\omega_j(\eta,x,t))d\eta
\end{eqnarray*}
and $c_j^1(\xi,x,t) = c_j(\xi,x,t)  \partial_t \omega_j(\xi,x,t).$
Define the operator $W \in \mathcal L(BC(\mathbb R; \mathbb R^n))$ by
 $$[W v]_j(t) =  \partial_t c_j(x_j,1 - x_j,t)\sum\limits_{k=1}^nr_{jk}v_k(\omega_j(x_j, 1 - x_j, t)),
 \quad j \le n. $$
Because of the assumption (\ref{G_i}) for $i=0$, we have $\left\|G_0\right\|_{\mathcal L(BC(\mathbb R; \mathbb R^n))}<1$. Hence, we can fix $\sigma<1$ such that  $\left\|G_0\right\|_{\mathcal L(BC(\mathbb R; \mathbb R^n))}+
\sigma\left\|W\right\|_{\mathcal L(BC(\mathbb R; \mathbb R^n))}<1$. Set
$$
\gamma=\max\left\{\left\|G_0\right\|_{\mathcal L(BC(\mathbb R; \mathbb R^n))}+
\sigma\left\|W\right\|_{\mathcal L(BC(\mathbb R; \mathbb R^n))}, \left\|G_1\right\|_{\mathcal L(BC(\mathbb R; \mathbb R^n))}
\right\}.
$$
We now  use the assumption (\ref{G_i}) for $i=1 $ and, therefore, conclude that the inequality $\left\|G_1\right\|_{\mathcal L(BC(\mathbb R; \mathbb R^n))}<1$ is fulfilled. This implies that $\gamma<1$. It follows that
$$
\begin{array}{rcl}
  \|G_0 v\|_{\sigma}
&\le&
\left\| G_0 v\right\|_{BC(\mathbb R;\mathbb R^n)}
 +\sigma \| W v\|_{BC(\mathbb R;\mathbb R^n)}
+ \sigma \left\|G_1 v^\prime\right\|_{BC(\mathbb R;\mathbb R^n)}\\
&\le& \gamma \left(\|v\|_{BC(\mathbb R;\mathbb R^n)} + 
\sigma\left\|v^\prime\right\|_{BC(\mathbb R;\mathbb R^n)}\right)=  \gamma\| v\|_{\sigma}.
\end{array}
$$
Furthermore,
$\|(I - G_0)^{-1}v\|_{\sigma} \le (1 - \gamma)^{-1}\|v\|_{\sigma}$
and, hence
\begin{equation}\label{ots441}
	\begin{split}
  \|(I - G_0)^{-1}v\|_{BC_t^1(\mathbb R;\mathbb R^n)} &\le \frac{1}{\sigma}\|(I - G_0)^{-1}v\|_{\sigma} \le \frac{1}{\sigma(1 - \gamma)}\|v\|_{\sigma} \\
 &\le\frac{1}{\sigma(1 - \gamma)}\|v\|_{BC_t^1(\mathbb R;\mathbb R^n)}. 
\end{split}
\end{equation}
Therefore, the system (\ref{simpl1})
can be expressed in the form
\begin{eqnarray} \label{z1}
z=(I-G_0)^{-1}\tilde h,
\end{eqnarray}
where $\tilde h(t)=\left(h_1(1,t),\dots,h_m(1,t),h_{m+1}(0,t),\dots,h_n(0,t)\right)$.
Substituting (\ref{z1}) into (\ref{simpl}), we obtain
\begin{equation}\label{uj}
	\begin{split}
u^*_j(x,t)&=\left[(I-C)^{-1}h\right]_j(x,t)  \\ 
&=\displaystyle c_j(x_j,x,t)\sum\limits_{k=1}^nr_{jk}\left[(I-G_0)^{-1}\tilde g\right]_k(\omega_j(x_j,x,t))+h_j(x,t)
\end{split}
\end{equation}
for all $ j \le n$. 
Combining  (\ref{uj}) with (\ref{ots441}) gives
\begin{equation}\label{I-C--1}
\|(I-C)^{-1}\|_{\mathcal L(BC^1_t(\Pi;\mathbb R^n))}\le 1+\frac{1}{\sigma(1 - \gamma)}\|C\|_{\mathcal L(BC^1_t(\Pi;\mathbb R^n))}.
\end{equation}

As demonstrated in \cite{KR}, the operators  $DC$ and $D^2$ exhibit smoothing properties. More precisely, they continuously map $BC(\Pi; \mathbb R^n)$
into $BC^1_t(\Pi; \mathbb R^n)$. This implies the existence of a positive constant
  $M_{11}$ such that for all
$u\in BC(\Pi;\mathbb R^n)$ we have
\begin{eqnarray} \label{ots3}
\left\|\partial_t\left[(DC + D^2)u\right]\right\|_{BC(\Pi;\mathbb R^n)} \le M_{11} \|u\|_{BC(\Pi;\mathbb R^n)}.
\end{eqnarray}

On the account of (\ref{oper2}),
\begin{equation}\label{h00}
\|u^*\|_{BC_t^1(\Pi;\mathbb R^n)} \le\displaystyle \|(I\!-\!C)^{-1}\|_{\mathcal L(BC^1_t(\Pi;\mathbb R^n))} \|(DC \!+\! D^2)u^*\!+\! (I \!+\! D)Ff\|_{BC_t^1(\Pi;\mathbb R^n)} .
\end{equation}
Combining  (\ref{h00}) with
(\ref{h5}),  (\ref{I-C--1}), and (\ref{ots3}), we obtain
$$
 \|u^*\|_{BC_t^1(\Pi;\mathbb R^n)}\le M_{12} \left(\| f\|_{BC^1(\Pi;\mathbb R^n)} +
  \| \partial^2_{xt}f\|_{BC(\Pi;\mathbb R^n)} \right),
$$
the constant $M_{12}$  being independent of  $f$.
Finally, from (\ref{eq:1}) we get the estimate
\begin{eqnarray*}
  \|\partial_x u^*\|_{BC(\Pi;\mathbb R^n)} &\le& \frac{1}{\Lambda_0}\left(\|f\|_{BC(\Pi;\mathbb R^n)}
 + \|b u^*\|_{BC(\Pi;\mathbb R^n)} + \|\partial_t u^*\|_{BC(\Pi;\mathbb R^n)}\right) \\
 & \le& M_{13} \left(\| f\|_{BC^1(\Pi;\mathbb R^n)} + \| \partial^2_{xt}f\|_{BC(\Pi;\mathbb R^n)} \right)
\end{eqnarray*}
for some  $M_{13}>0$ not depending on $f$.  
The estimate (\ref{est21}) now easily follows.

\subsection{Proof of Part ${\bf (ii)}$: $BC^2$-solutions}\label{BC2}

Differentiating   the system (\ref{eq:1})  with $u=u^*$ in a distributional sense in $t$ and the boundary conditions
 (\ref{eq:2})   pointwise, we get, respectively,
\begin{equation}\label{eq:112}
(\partial_t + a\partial_x)\partial_tu^* +
\left(b-a^{-1}\partial_ta\right)\partial_tu^*
+ \left(\partial_tb-a^{-1}\partial_ta \,b\right) u^*= \partial_t f - a^{-1}\partial_ta \,f
\end{equation}
and
\begin{equation} \label{eq:115}
\begin{array}{ll}
\displaystyle   \partial_tu^*_{j}(0,t)=R _j\partial_t u^*(\cdot,t),
 \quad 1\le j\le m,\; t\in \mathbb R,  \\ [2mm]
  \displaystyle \partial_tu^*_{j}(1,t)=R _j\partial_t u^*(\cdot,t),
 \quad m< j\le n,\; t\in \mathbb R.
\end{array}
\end{equation}
Using a similar argument as above, we state that the function $u^* \in BC^1(\Pi, \mathbb R^n)$ satisfies both (\ref{eq:112}) in distributional sense
and (\ref{eq:115}) pointwise
if and only if the function   $u^*$ 
satisfies the following system pointwise:
\begin{align}\label{cd2}
  \partial_tu^*_j(x,t)&=c_j^1(x_j,x,t) R_j \partial_tu^*(\cdot,\omega_j(x_j)) \nonumber\\ 
&\quad\displaystyle -\int_{x_j}^x d_j^1(\xi,x,t)
\biggl(\sum_{k\not=j} b_{jk}(\xi,\omega_j(\xi)) \partial_tu^*_k(\xi,\omega_j(\xi)) \nonumber\\ 
&\quad\displaystyle+  \biggl[\sum_{k=1}^n(\partial_t b_{jk} \!-\! a^{-1}_j\partial_t a_j \, b_{jk} )u^*_k \!-\! \partial_t f_j \!+\! a^{-1}_j\partial_t a_j \, f_j\biggr]
(\xi,\omega_j(\xi))\biggr)d\xi, j\!\le\! n.
\end{align}
 Rewrite  (\ref{cd2}) in the form
 \begin{eqnarray} \label{oper3}
  w = C_1w + D_1 w  + F_1(u^*,f),
 \end{eqnarray}
where $w=\partial_tu^*$ and  the operators
$C_1, D_1,F_1\in \mathcal L(BC(\Pi;\mathbb R^n))$ are defined, respectively,  by
\begin{eqnarray*}
& & [C_1w]_j(x,t) =
 c_j^1(x_j,x,t)R_j w(\cdot,\omega_j(x_j)),\\ [1mm]
& & [D_1w]_j(x,t) =
 -\int_{x_j}^{x}d_j^1(\xi,x,t)\sum_{k\neq j} b_{jk}(\xi,\omega_j(\xi))w_k(\xi,\omega_j(\xi)) d\xi,  \\
[1mm]
& & [F_1(u,f)]_j(x,t) = \\
& & -\int_{x_j}^x d_j^1(\xi,x,t)  \left[\sum_{k=1}^n(\partial_t b_{jk} - a^{-1}_j\partial_t a_j \, b_{jk} )u_k - \partial_t f_j + a^{-1}_j\partial_t a_j \, f_j\right]
(\xi,\omega_j(\xi))d\xi.
\end{eqnarray*}
 Iterating (\ref{oper3}), we obtain
 \begin{eqnarray} \label{oper4}  
 w = C_1w + (D_1 C_1 + D_1^2) w + (I+D_1) F_1(u^*,f).{\tiny } 
 \end{eqnarray}
  Similarly to the operators $DC$ and $D^2$, the
operators $D_1 C_1$ and $D_1^2$ are smoothing, mapping  continuously   $BC( \Pi; \mathbb R^n)$ into $BC^1_t(\Pi; \mathbb R^n).$
Moreover, the following  smoothing estimate is true:
\begin{equation}\label{smooth}
\left\|(D_1C_1 + D_1^2)w\right\|_{BC^1_t(\Pi;\mathbb R^n)} \le M_{14} \|w\|_{BC(\Pi;\mathbb R^n)}
\end{equation}
for some $ M_{14} >0$ not depending on $w\in BC(\Pi;\mathbb R^n)$.

Again, similarly to the bijectivity of the operator $I-C$, we prove that $I - C_1$ is a bijective operator   from $BC^1_t( \Pi; \mathbb R^n)$ to
itself.
In other words, we have to show that the system
$$
w_j(x,t)=c_j^1(x_j,x,t)R_jw(\cdot,\omega_j(x_j,x,t))+h_j(x,t), \quad j\le n,
$$
is uniquely solvable in $BC^1_t (\Pi;\mathbb R^n)$ for each $h\in BC^1_t (\Pi;\mathbb R^n)$ or, the same,
  that the operator
$
I-G_1
$
 is a bijective operator from    $BC^1(\mathbb R;\mathbb R^n)$ to itself,
where the operator $G_1$ is given by (\ref{eq:G}).
To this end, we use the assumptions 
that
 $\|G_1\|_{\mathcal L(BC(\mathbb R; \mathbb R^n))}<1$ and  $\|G_2\|_{\mathcal L(BC(\mathbb R; \mathbb R^n))}<1$.

Similarly to (\ref{I-C--1}), the inverse to $I-C_1$ fulfills the bound
\begin{equation}\label{i1}
\|(I-C_1)^{-1}\|_{\mathcal L(BC^1_t(\Pi;\mathbb R^n))}\le 1+M_{15}\|C_1\|_{\mathcal L(BC^1_t(\Pi;\mathbb R^n))}
\end{equation}
for some $M_{15}>0$.
Further, due to (\ref{oper4}), it holds
\begin{eqnarray}\label{h000}
 \|w\|_{BC_t^1(\Pi;\mathbb R^n)} 
& \le& \displaystyle \|(I-C_1)^{-1}\|_{\mathcal L(BC^1_t(\Pi;\mathbb R^n))} \|(D_1C_1 + D_1^2)w \nonumber\\
 & &  + (I + D_1)F_1(u^*,f)\|_{BC_t^1(\Pi;\mathbb R^n)} .
\end{eqnarray}
Combining  (\ref{h000}) with
(\ref{est21}),  (\ref{smooth}), and (\ref{i1}), we conclude that there exists $M_{16}>0$ such that
$$
\|w\|_{BC^1_t(\Pi;\mathbb R^n)} \le 
M_{16}\left(\| f\|_{BC^1(\Pi;\mathbb R^n)}+ \|\partial_{xt}^2 f\|_{BC(\Pi;\mathbb R^n} +
\|\partial_t^2 f\|_{BC\left(\Pi;\mathbb R^n\right)}\right).
$$
Hence,
\begin{equation}\label{h55}
\|\partial_t^2 u^*\|_{BC(\Pi;\mathbb R^n)} \le M_{16}\left(\| f\|_{BC^1(\Pi;\mathbb R^n)}\!+\! \|\partial_{xt}^2 f\|_{BC\left(\Pi;\mathbb R^n\right)} \!+\!
\|\partial_t^2 f\|_{BC\left(\Pi;\mathbb R^n\right)}\right).
\end{equation}
From (\ref{eq:112}) we now easily get the bound 
$$
\|\partial^2_{xt} u^*\|_{BC(\Pi;\mathbb R^n)}  \le M_{17} \left(\| f\|_{BC^1(\Pi;\mathbb R^n)}+ \|\partial_{xt}^2 f\|_{BC\left(\Pi;\mathbb R^n\right)} +
\|\partial_t^2 f\|_{BC\left(\Pi;\mathbb R^n\right)}\right).
$$
Differentiating  the system (\ref{eq:1}) in $x$, we additionally obtain an estimate for $\partial_x^2 u^*$, namely,
$$
\|\partial^2_{x} u^*\|_{BC(\Pi;\mathbb R^n)}  \le M_{18} \left(\| f\|_{BC^1(\Pi;\mathbb R^n)}+ \|\partial_{xt}^2 f\|_{BC\left(\Pi;\mathbb R^n\right)} +
\|\partial_t^2 f\|_{BC\left(\Pi;\mathbb R^n\right)}\right),
$$
where the constant $M_{18} $ is independent of $f$.
 The estimate (\ref{est22}) now easily follows. The proof of Part ${\bf (ii)}$ of the theorem is complete.
 
 \subsection{Proof of Part ${\bf (iii)}$:  Almost periodic solutions}  Now we assume that the coefficients of the initial problem are almost periodic 
 in $t$ and  prove that the  solution $u^*$ constructed above
belongs to~$AP(\Pi;\mathbb R^n)$. 
 
 Fix $\mu > 0$ and let $h$ be a common $\mu$-almost period of the functions $a_j$, $b_{jk}$, and $f_j$
 as well as
their derivatives in $x$ and $t$.
Then the functions $\tilde a_j(x,t) = a_j(x,t + h) - a_j(x,t)$ and $\tilde b_{jk}(x,t) = b_{jk}(x,t + h) - b_{jk}(x,t)$ satisfy
the inequalities
\begin{eqnarray} \label{tilde}
 \|\tilde a_j\|_{BC^1(\Pi)} \le \mu\quad \mbox{ and }\quad \|\tilde b_{jk}\|_{BC^1(\Pi)} \le \mu.
\end{eqnarray}
The desired statement will be proved if we show that $h$ is an almost period of the function~$u^*(x,t).$

While  the function $u^*(x,t)$ is a unique bounded  classical solution
to the problem (\ref{eq:1}), (\ref{eq:2}), the function 
  $u^*(x,t+h)$ is a unique bounded  classical solution to the  system
$$
 \partial_t u + a(x,t+h)\partial_x u  + b(x,t+h)u = f(x,t+h)
$$
subjected to the boundary conditions (\ref{eq:2}).
Then the difference $z(x,t) = u^*(x,t) - u^*(x,t+h)$ satisfies the system
$$
\partial_t z + a(x,t) \partial_x z + b(x,t) z = g(x,t)
 $$
 and the boundary conditions (\ref{eq:2}), where
 \begin{align*}
   g(x,t)
  &= - \left(b(x,t) -  b(x,t+h)\right)u^*(x,t+h) \\ 
 & \quad - \left(a(x,t) -   a(x,t+h)\right)\partial_x u^*(x,t+h)
 +  f(x,t) - f(x,t+h).
\end{align*}
 The function $g(x,t)$ is $C^1$-smooth in $x$ and $t$ and, due to (\ref{est21}) and (\ref{tilde}),
  satisfies the bound
  \begin{equation}\label{th-6h}
  	\begin{split}
 \|g\|_{BC\left(\mathbb R;L^2((0,1);\mathbb R^n)\right)}&\le  \|g\|_{BC(\Pi;\mathbb R^n)} \\ 
 & \le \mu\left[L_{1}\left(\| f\|_{BC^1(\Pi;\mathbb R^n)} + \| \partial^2_{xt}f\|_{BC(\Pi;\mathbb R^n)} \right)
 +1\right].
\end{split}
\end{equation}
Now, by Theorem \ref{u_*},   the function
$y: \mathbb{R} \to L^2((0,1);\mathbb R^n)$ defined by $\left[y(t)\right](x)=z(x,t)$
 is given by the formula 
\begin{eqnarray*}  
y(t) = \int_{-\infty}^{t} U(t,s) g(s) d s,
\end{eqnarray*}
where $\left[g(t)\right](x)=g(x,t)$.
Moreover, on account of (\ref{th-6h}),
it satisfies  to prove the following  estimate:
\begin{equation}	\label{th-33wh}
  \begin{array} {rcl}
 	 \| y(t)\|_{L^2((0,1);\mathbb R^n)} &\le &\displaystyle\frac{M}{\alpha}\|g\|_{BC\left(\mathbb R;L^2((0,1);\mathbb R^n)\right)}  \nonumber\\[3mm]
 	& \le &\displaystyle\frac{\mu M}{\alpha}\left[L_{1}\left(\| f\|_{BC^1(\Pi;\mathbb R^n)} + \| \partial^2_{xt}f\|_{BC(\Pi;\mathbb R^n)} \right)+1\right].
 \end{array}
\end{equation}
 It follows that  the function $v^*$ 
 is almost periodic in $t$ with values in $L^2((0,1);\mathbb R^n)$. Hence, the function $u^*(x,t)=[v^*(t)](x)$
is almost periodic in $t$ in the mean in $x$, see \cite[Theorem 2.15]{Cord}.
Moreover, since  $u^*\in BC^2(\Pi, \mathbb R^n),$  the derivative
 $\partial_t u^*(t,x)$ is uniformly continuous in $t$ in the mean in $x$.
On account of   \cite[Theorem 2.13]{Cord},   we conclude that $\partial_t u^*(t,x)$ is almost periodic in $t$ in the mean in $x$.
 The almost periodicity of $\partial_x u^*(x,t)$ now easily follows from the equation (\ref{eq:1}).
 Consequently,  the function $u^*(x,t)$ is almost periodic in $t$ uniformly in $x$, as desired.
 
 \section{Periodic solutions: proof of Theorem \ref{periodic}}\label{per}
\setcounter{equation}{0}

Let the conditions of Theorem~\ref{lin-smooth} ${\bf (i)}$ are fulfilled.
	Note that those conditions  fall into the scope of   Theorem~\ref{u_*}. Accordingly to Theorem~3.3, the problem 
	 (\ref{unperturb})
	has a unique bounded classical solution~$v^*$,  given by the formula (\ref{boundsol}). As  the coefficients $a_j$ and $b_{jk}$ are $T$-periodic in $t$, we have $\mathcal A(t+T)=\mathcal A(t)$ and, hence
	$U(t+T,s+T)=U(t,s)$ for all $t\ge s$. This and the solution formula 
	(\ref{boundsol})
	imply that $v^*(t+T)=v^*(t)$ for all $t\in\mathbb R$. Moreover, as $v^*$ is the classical solution, 
		then it is continuous in $t\in\mathbb R$ with values in $H^1((0,1);\mathbb R^n)$. On account of the periodicity of $v^*$,
		the solution 	$v^*$ belongs to $BC(\mathbb R;H^1((0,1);\mathbb R^n))$ and, hence, to $BC(\mathbb R;C([0,1];\mathbb R^n))$.	
	This implies  that the function $u^*(x,t)=[v^*(t)](x)$
belongs to $C_{per}(\Pi;\mathbb R^n)$. Then, due to the proof of Theorem~\ref{lin-smooth}
(see Section \ref{BC0}), $u^*$ satisfies the equation (\ref{rep}) or, the same, its operator 
form  (\ref{oper}) pointwise.

To finish the proof of Claim ${\bf (i)}$, it remains to prove the a priori estimate (\ref{est211}). We show
that the map $f\in  C_{per}(\Pi;\mathbb R^n)\to u^* \in  C_{per}(\Pi;\mathbb R^n)$ is continuous. To this end, if we prove 
that the operator $I- C -D$ is Fredholm of index zero from $C_{per}(\Pi;\mathbb R^n)$ to itself and that it
is injective from $C_{per}(\Pi;\mathbb R^n)$ to itself.
This  will then imply the bijectivity of $I- C -D$ and the boundedness of the inverse 
$(I- C -D)^{-1}$, as desired.

Despite the Fredholmness of $I- C -D$  follows from \cite[Theorem 1.2]{KK} and \cite[Theorem 1.2]{KR},
 we
give a short proof of the Fredholmness
property,  using the notion of parametrix: 
First note that the operator $I-C$ is bijective  from $C_{per}(\Pi;\mathbb R^n)$ to itself. Indeed, similarly to the proof of 
$BC^1$-smoothness in Section \ref{BC1}, the bijectivity of $I-C$
 is equivalent to the
 unique solvability 
 of  the  system (\ref{simpl}) with respect to $u$ in $C_{per}(\Pi;\mathbb R^n)$ or, the same, to the
 unique solvability  
 of  the  system (\ref{simpl1}) with respect to $z$ in $C_{per}(\mathbb R;\mathbb R^n)$. The last statement is, in its turn, equivalent to 
 the bijectivity of $I-G_0$ from $C_{per}(\mathbb R;\mathbb R^n)$ to itself. But this is true due to the assumption 
(\ref{G_i})  for $i=0$ and the Banach fixed-point theorem. 

Taking into account the bijectivity of $I-C$,  the operator $I- C -D$ is Fredholm of index zero 
from $C_{per}(\Pi;\mathbb R^n)$ to itself
if and only if the operator $I-(I-C)^{-1}D$ is Fredholm operator of index zero from $C_{per}(\Pi;\mathbb R^n)$ to itself.
Further we use the Fredholmness criterion given by \cite[Proposition 1, pp. 298--299]{Zeidler}. It states that, if there exist linear continuous operators $P_l, P_r: C_{per}(\Pi;\mathbb R^n)\to C_{per}(\Pi;\mathbb R^n)$ (called parametrices) and compact operators
$Q_l,Q_r: C_{per}(\Pi;\mathbb R^n)\to C_{per}(\Pi;\mathbb R^n)$ such that
$$
P_l\left[I-(I-C)^{-1}D\right]=I+Q_l\quad\mbox{and}\quad \left[I-(I-C)^{-1}D\right]P_r=I+Q_r,
$$ 
then the operator $I-(I-C)^{-1}D$ is Fredholm of index zero. Take
$$
P_l=P_r=I+(I-C)^{-1}D
$$
and calculate
$$
P_l\left[I-(I-C)^{-1}D\right]=\left[I-(I-C)^{-1}D\right]P_r=I-(I-C)^{-1}D(I-C)^{-1}D.
$$
Accordingly to  \cite[Proposition 1, pp. 298--299]{Zeidler}, it remains
to prove the compactness of the operator $(I-C)^{-1}D(I-C)^{-1}D$ or, on the account of the 
boundedness of  $(I-C)^{-1}$, the compactness of the operator $D(I-C)^{-1}D: C_{per}(\Pi;\mathbb R^n)\to C_{per}(\Pi;\mathbb R^n)$. Indeed,
$$
D(I-C)^{-1}D= D\left[I+C(I-C)^{-1}\right]D=D^2+DC(I-C)^{-1}D.
$$
Because of the boundedness of the operator $(I-C)^{-1}D$, we are done if we prove the compactness of the operators
$D^2$ and $DC$. But this follows immediately from  \cite[Equation (4.2)]{KR}
and  the Arzela-Ascoli theorem. The proof of the Fredholmness of  $I- C -D$ is, therefore, complete.

To prove the injectivitiy of $I- C -D$, 
suppose that the equation (\ref{oper}) has two continuous periodic solutions, say $u^1$ and $u^2$. Then the difference
$u^1-u^2$ satisfies the equation (\ref{oper}) with $f=0$ and, consequently, the equation (\ref{oper2}) with $f=0$.
Since the operators
$D^2$ and $DC$ map continuously $C_{per}(\Pi;\mathbb R^n)$ into $C_{per}^1(\Pi;\mathbb R^n)$
and the operator $(I-C)^{-1}$ maps $C_{per}(\Pi;\mathbb R^n)\cap BC_t^1(\Pi;\mathbb R^n)$ into itself
(see Section~\ref{BC1}),  we
conclude that
$u^1-u^2\in C_{per}(\Pi;\mathbb R^n)\cap BC_{t}^1(\Pi;\mathbb R^n)$.  Using additionally that $u_1-u_2$ is a distributional solution to (\ref{eq:1u}), (\ref{eq:2}),
we get that $u^1-u^2\in C_{per}^1(\Pi;\mathbb R^n)$. This means that 
$u^1-u^2$ is a classical solution to
(\ref{eq:1u}), (\ref{eq:2}).

For an arbitrary fixed $s\in\mathbb R$, the function $u^1(\cdot,s)-u^2(\cdot,s)$ belongs to  $C([0,1];\mathbb R^n)$ and, hence, to $L^2((0,1);\mathbb R^n)$. Then,
by Definition \ref{L2}, the function $u^1-u^2$ is also the $L^2$-generalized solution to the problem 
(\ref{eq:1u}), (\ref{eq:2}), (\ref{eq:in}). Due to the exponential decay
 estimate (\ref{est1}),  
 $u^1-u^2$ decays to zero as $t\to\infty$.  As $u^1-u^2$ is $T$-periodic, this yields that
  $u^1(x,t)= u^2(x,t)= 0$ for all $x\in[0,1]$ and $t\in \mathbb R$. The injectivity is therewith proved.
  
 As it follows from the above,
  $
   u^*=(I- C -D)^{-1}Ff
   $
  and the following a priori estimate is fulfilled:
  \begin{equation}\label{h555}
  \|u^*\|_{BC(\Pi;\mathbb R^n)} \le L_5\| f \|_{BC(\Pi;\mathbb R^n)}
\end{equation}
  for some $L_5>0$ not depending on $ f \in BC(\Pi;\mathbb R^n)$. Moreover,  combining  (\ref{h00}) with  (\ref{I-C--1}),  (\ref{ots3}), and (\ref{h555}),   we obtain the a priori estimate
 \begin{equation}\label{h666}
 \|u^*\|_{BC^1_t(\Pi;\mathbb R^n)} \le L_6\| f \|_{BC^1_t(\Pi;\mathbb R^n)}.
\end{equation}
 The final estimate (\ref{est211}) now straightforwardly follows from (\ref{eq:1}).
This finishes the proof of  Claim  ${\bf (i)}$ of Theorem \ref{periodic}.

To prove Claim ${\bf (ii)}$ of Theorem \ref{periodic}, we 
first use Claim ${\bf (ii)}$ of Theorem \ref{lin-smooth}
and conclude that $u^*\in C^2_{per}(\Pi;\mathbb R^n)$. To prove
the estimate (\ref{L6}), we
follow a similar argument as in the proof of the estimate (\ref{est22})
in Section \ref{BC2}, with the following changes.
Combining (\ref{h000}) with (\ref{smooth}), (\ref{i1}), (\ref{h666}), we get, instead of (\ref{h55}), the estimate 
$$\|\partial_t^2u^*\|_{BC(\Pi;\mathbb R^n)} \le L_7\| f \|_{BC^2_t(\Pi;\mathbb R^n)}.$$
Then from (\ref{eq:112}) we immediately get a similar bound for $\partial^2_{xt}u^*$.
Differentiating  the system (\ref{eq:1}) in $x$, we get also the bound for $\partial_x^2 u^*$, namely
	$$
	\|\partial^2_{x} u^*\|_{BC(\Pi;\mathbb R^n)}  \le L_{8} \left(\|\partial_x f\|_{BC(\Pi;\mathbb R^n)}+
	\|\partial_t^2 f\|_{BC\left(\Pi;\mathbb R^n\right)}\right).
	$$
 The desired  bound (\ref{L6}) now easily follows. This completes the proof of  Theorem~\ref{periodic}.

\section*{Acknowledgments}
The authors were supported by the Volkswagen Foundation grant 
	``From Modelling and Analysis to Approximation". V. Tkachenko was also supported by the
	Volkswagen Foundation grant ``Dynamic Phenomena in Elasticity Problems" and
	by the DFG project SFB/TRR 109 ``Discretization in Geometry and Dynamics", project  N195170736.

\end{document}